\documentclass[final,a4paper,leqno]{siamltex}

\usepackage[english]{babel}
\usepackage{amsmath,amssymb,amsfonts,graphicx}
\usepackage[ruled]{algorithm2e} 
\usepackage{myshortcuts}
\usepackage{comment}
\usepackage{tikz}
\usetikzlibrary{external}
\tikzexternalize 

\usepackage[latin1]{inputenc} 

\usepackage{url}

\usepackage{floatrow}
\usepackage{subfig}


\pgfplotsset{compat=newest}
\pgfplotsset{plot coordinates/math parser=false}

%
%
%

\iftrue 
  \usepackage{tikz}
  \usepackage{pgfplots}
  \pgfplotsset{
    tick label style={font=\scriptsize},
    label style={font=\scriptsize},
    legend style={font=\scriptsize}
  }
  \usetikzlibrary{decorations.pathreplacing}
  \iffalse 
     \usepgfplotslibrary{external}
     \tikzexternalize[prefix=gfx_tmp/]
     
  \else
     \renewcommand{\tikzsetnextfilename}[1]{}
  \fi
\else
  \usepackage{tikzexternal}
  \usepackage{pgfplots}
  \tikzexternalize
  \tikzsetexternalprefix{gfx_tmp/}
  
\fi

\newenvironment{thm}[1][\unskip]{\begin{theorem}[#1]}{\end{theorem}}
\newtheorem{remark}[theorem]{\textit{Remark}}

\newenvironment{lem}[1][\unskip]{\begin{lemma}[#1]}{\end{lemma}}

\title{The waveguide eigenvalue problem
and the tensor infinite Arnoldi method}
\author{Elias Jarlebring, Giampaolo Mele, Olof Runborg\thanks{Dept. Mathematics, KTH Royal Institute of Technology, SeRC swedish e-science research center, Lindstedtsv\"agen 25, Stockholm, Sweden, email: \{eliasj,gmele,olofr\}@kth.se}}
\date{\today}

\begin{document}
\maketitle

\begin{abstract}
We present a new computational approach for a class of large-scale 
nonlinear eigenvalue problems (NEPs) that are nonlinear
in the eigenvalue. The contribution of this paper is two-fold. 
We derive a new iterative algorithm for NEPs, 
the tensor infinite Arnoldi method (TIAR),
which is applicable to a general class of NEPs, and
we show how to specialize the algorithm to 
a specific NEP: the waveguide eigenvalue problem. 
The waveguide eigenvalue problem arises from a finite-element
discretization of a partial differential equation (PDE)
used in the study waves propagating in a periodic medium. 
The algorithm is successfully applied to accurately solve 
benchmark problems as well as complicated waveguides.
We study the complexity of the specialized algorithm 
with respect to the number of iterations $m$ and the
size of the problem $n$, both from a theoretical perspective and
in practice. For the waveguide eigenvalue problem, we establish
that the computationally 
dominating part of the algorithm
has complexity $\mathcal{O}(nm^2+\sqrt{n}m^3)$.
Hence, the asymptotic complexity of TIAR applied to the
waveguide eigenvalue problem, for $n\rightarrow\infty$, is
the same as for Arnoldi's method for standard eigenvalue problems.

\end{abstract}
\section{Introduction}\label{sect:intro}
%
%
Consider the propagation of 
waves  in a periodic medium,
which are governed by the Helmholtz equation 
\begin{equation}\label{eq:helmholtz}
  \Delta v(x,z) + \omega^2 \eta(x,z)^2 v(x,z) = 0,\qquad (x,z)\in \RR^2,
\end{equation}
where $\eta \in L^\infty(\RR^2)$
is called the index of refraction and $\omega$ the temporal frequency.
When \eqref{eq:helmholtz} models an electromagnetic
wave, the solution $v$ typically represents the $y$-component of the electric or the magnetic field. 
The (spatially dependent) 
wavenumber is  $\kappa(x,z):=\omega\eta(x,z)$
and we assume that the material is periodic in the $z$-direction and
without loss of generality the period is assumed to be 1,
i.e., 
$\eta(x,z+1)=\eta(x,z)$. 
The index of refraction is assumed to be 
constant for sufficiently large $|x|$, 
such that $\kappa(x,z)=\kappa_-$ when $x< x_-$, $\kappa(x,z)=\kappa_+$ when $x> x_+$.
In this paper we assume the wavenumber to be piecewise constant.
Figure~\ref{fig:wg_example} shows an example of the setup.

Bloch solutions to \eqref{eq:helmholtz} are those solutions 
that can be factorized as a product of a $z$-periodic function and
 $e^{\gamma z}$, i.e., 
\begin{equation}\label{eq:guidedform}
  v(x,z) = \hat{v}(x,z)e^{\gamma z},\qquad \hat{v}(x,z+1)=\hat{v}(x,z).
\end{equation}
The constant $\gamma\in\CC$ is called the Floquet multiplier and without loss of generality, 
it is assumed that $\Im\gamma\in(-2\pi,0]$.
We interpret \eqref{eq:helmholtz} in a weak sense.
We are only interested in Bloch solutions that
decay in magnitude as $|x|\rightarrow\infty$ and we require that 
$\hat{v}$, restricted to $S:=\RR\times (0,1)$, belongs to the Sobolev space $H^1(S)$. 
Moreover, we assume that any Bloch solution has a representative
in $C^1(S)$. These solutions are in general not in $C^2(S)$
since $\kappa$ is discontinuous.

\begin{figure}[ht]
  \begin{center}
    \scalebox{0.8}{\includegraphics{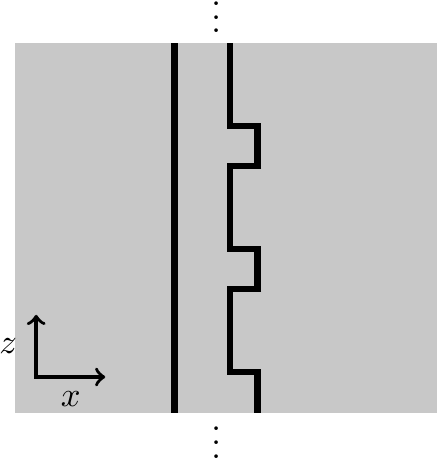}}
    \caption{Illustration of a waveguide defined on $\RR^2$. The wavenumber $\kappa(x,z)$ is constant in the three regions separated by the thick lines.}\label{fig:wg_example}
    \end{center}
\end{figure}
In this contex, Bloch solutions are also called
guided modes of \eqref{eq:helmholtz}.
If $\gamma$ is purely imaginary, the mode is 
called {\em propagating}; if $|\Re\gamma|$ is small
it is called {\em leaky}.
Both mode types are of great interest in  various settings
\cite{Fliss:2013:DIRICHLET, Peng:1975:THEORY,Tausch:2013:SLABS,Stowell:2010:VARIATIONAL,Bao:1995:TIMEHARMONIC}.
We present a procedure to compute leaky modes, with $\Re \gamma <0$
and $\Im \gamma\in(-2\pi,0)$.
This specific setup has been studied, e.g., in
\cite{Tausch:2000:WAVEGUIDE}.


%
%
To compute the guided modes one can either fix $\omega$ and
find $\gamma$, or, conversely, fix $\gamma$ and find $\omega$.
Both formulations lead to a PDE eigenvalue problem set on 
the unbounded domain $S$. When $\gamma$ is held fix,  the eigenvalue
problem is linear and if $\omega$ is held fix, it is nonlinear
(quadratic) in $\gamma$. 
%
In this paper we fix $\omega$, 
and the substitution of \eqref{eq:guidedform}  into \eqref{eq:helmholtz}
leads to the following problem. 
Find $(\gamma,\hat{v})\in \CC\times H^1(S)$ such that
\begin{subequations}\label{eq:wg}
\begin{eqnarray}
 \;\;\;\;\;\;\;\; 
\Delta \hat{v}(x,z)&+&2\gamma \hat{v}_z(x,z)+
(\gamma^2+\kappa(x,z)^2)\hat{v}(x,z)=0,\;\;
(x,z)\in S,
\label{eq:wg0}	\\
\hat{v}(x,0)&=&\hat{v}(x,1),\;\;\textrm{}x\in\RR,\\
\hat{v}_z(x,0)&=&\hat{v}_z(x,1),\;\textrm{}x\in\RR.
\end{eqnarray}
\end{subequations}
%
%
The problem \eqref{eq:wg}, which in this paper is referred to as the waveguide eigenvalue
problem,  is defined on an unbounded domain.
We use a well-known technique to reduce
the problem on a unbounded domain to a problem on a bounded domain. We impose 
artificial (absorbing) boundary conditions, in 
particular so-called Dirichlet-to-Neumann (DtN) maps.
See \cite{Hagstrom:2003:ABS,Berenger:1994:PML} for literature on artificial boundary conditions. 
%

The DtN-reformulation and a finite-element discretization, with rectangular elements 
generated by a uniform grid with $n_x$ and $n_z$ grid points
in $x$ and $z$-direction correspondingly,
is presented in section~\ref{sect:discretization}.
A similar DtN-discretization has been applied to the waveguide 
eigenvalue problem in the literature \cite{Tausch:2000:WAVEGUIDE}.
In relation to \cite{Tausch:2000:WAVEGUIDE}, 
we need further equivalence results for the DtN-operator and 
use a different type of discretization, which allows easier
integration with our new iterative method.
Due to the fact that the DtN-maps depend on $\gamma$, 
the discretization leads to a nonlinear eigenvalue problem (NEP)
of the following type. Find $(\gamma,w)\in\CC\times\CC^n\backslash\{0\}$ such 
that
\begin{equation}
 M(\gamma)w=0,\label{eq:nep}
\end{equation}
where
\begin{equation} 
M(\gamma):=\begin{bmatrix}
  Q(\gamma) 		& 	C_1(\gamma)	\\
  C_2^T 		& 	P(\gamma)
\end{bmatrix}\in\CC^{n\times n},\label{eq:Mdef}
\end{equation}
and $n=n_xn_z+2n_z$.
The matrices 
$Q(\gamma)\in\CC^{n_xn_z\times n_xn_z}$ and 
$C_1(\gamma)\in\CC^{n_xn_z\times 2n_z}$
are a quadratic polynomials respect $\gamma$,
$Q(\gamma)	=	A_0	+	A_1\gamma	+ A_2\gamma^2$,
$C_1(\gamma)	=	C_{1,0} +	C_{1,1}\gamma	+ C_{1,2}\gamma^2$, 
where $A_i,C_{1,i}$ and $C_2^T$ are large and sparse.
The matrix $P(\gamma)$ has the structure 
\begin{equation}\label{eq:Pmat}
  P(\gamma)=
\begin{bmatrix}
 R\Lambda_{-}(\gamma)R^{-1} & 0 \\
 0 &  R\Lambda_{+}(\gamma)R^{-1}
\end{bmatrix}\in\CC^{2n_z\times 2n_z}
\end{equation}
and $\Lambda_\pm(\gamma)\in\CC^{n_z\times n_z}$ are diagonal
matrices containing 
nonlinear functions of $\gamma$ involving square roots of polynomials.
The  matrix-vector
product
corresponding to 
$R$ and $R^{-1}$ can be computed with 
the Fast Fourier Transform (FFT).

We have two main contributions in this paper:
\begin{itemize}
  \item a new algorithm, tensor infinite Arnoldi method (TIAR), 
for a general class of NEPs \eqref{eq:nep}, which is based on a tensor 
representation of the basis of the infinite Arnoldi method (IAR) \cite{Jarlebring:2012:INFARNOLDI};
  \item an adaption of TIAR to the structure  arising from a particular type of discretization of the waveguide eigenvalue problem.
\end{itemize}
%

The general NEP \eqref{eq:nep} 
has received considerable attention in the literature
in various generality settings. 
We list those algorithm that are related
to our method. 
See the review papers
\cite{Mehrmann:2004:NLEVP,Voss:2013:NEPCHAPTER}
and the problem collection \cite{Betcke:2013:NLEVPCOLL}, for
further literature.

Our algorithm TIAR is based on \cite{Jarlebring:2012:INFARNOLDI},
but uses a compact representation of 
Krylov subspace generated by a particular structure 
in the basis matrix.
Different compact representations for iterative
methods for polynomial eigenvalue problems and other NEPs
have been developed in other works. In particular,
the basis matrices stemming from Arnoldi's method
applied to a companion linearizations of polynomial  
eigenvalue problems can be exploited
by using reasoning with the Arnoldi factorization. 
This has been done for Arnoldi methods in \cite{Kressner:2014:TOAR,Zhang:2013:MOR,Bai:2005:SOAR} and rational Krylov methods \cite{VanBeeumen:2014:CORK}. The approach \cite{Kressner:2014:TOAR} is 
designed for polynomial eigenvalue problems expressed in a Chebyshev basis. 
 makes it 
particularly suitable to use in a two stage-approach,
which is done in \cite{Effenberger:2012:CHEBYSHEV}, where
the eigenvalues of interest lie in a predefined interval 
and (a non-polynomial) $M$ can first be approximated with interpolation 
on a Chebyshev grid and subsequently the 
polynomial eigenvalue problem can be solved with \cite{Kressner:2014:TOAR}. 
The algorithm in \cite{Zhang:2013:MOR} is mainly developed for moment-matching in model  reduction of time-delay systems, where the main goal is to compute a subspace (of $\CC^n$) with appropriate approximation properties. 
We stress that the algorithm in the preprint \cite{VanBeeumen:2014:CORK}, 
which has been developed in parallell independent of our work, 
is similar to TIAR in the sense that it 
can be interpreted as a rational Krylov method for
general NEPs involving a compact representation. 
The derivation is very different 
(involving reasoning with a compact Krylov factorization as in \cite{Zhang:2013:MOR,Kressner:2014:TOAR}), 
and leads to a general class of methods with different algorithmic properties.

Some recent approaches for \eqref{eq:nep} exploit
low-rank properties, e.g., 
$M^{(i)}(0)=V_iQ^T$, where $V_i,Q\in\CC^{n\times r}$
for sufficiently large $i$, 
and $r$ is small relative to $n$.
See, e.g., \cite{Su:2011:REP,Voss:2011:LOWRANK,VanBeeumen:2014:LOWRANK}.
This property is present here if we select
$r=n_z=\mathcal{O}(\sqrt{n})$, which is not very small 
with respect to the size of the problem, making
the low-rank methods to not appear favorable for this NEP.

The (non-polynomial) nonlinearities
in our approach stem from absorbing boundary
conditions. Other absorbing boundary conditions 
also lead to NEPs. This has been illustrated
in specific applications, e.g., in 
the simulation of optical fibers \cite{Kaufman:2006:FIBER},
cavity in accelerator design \cite{Liao:2010:NLRR},
double-periodic photonic crystals
\cite{Effenberger:2012:LINEARIZATION,Engstrom:2014:SPECTRAL}
and microelectromechanical systems  \cite{Bindel:2005:ELASTIC}.
There is to our knowledge no approach that integrates
the structure of the discretization
of the PDE and the $\gamma$-dependent 
boundary conditions with an Arnoldi method.
The adaption of the algorithm to our specific
PDE is presented in section~\ref{sect:adaption}.

The notation is mostly standard. A matrix consisting of elements 
$a_{i,j}$ is denoted
\[
[a_{i,j}]_{i,j=1}^m=\begin{bmatrix}
a_{1,1} & \cdots & a_{1,m}\\
\vdots &  & \vdots\\
a_{m,1} & \cdots & a_{m,m}
\end{bmatrix}.
\]
The notation is analogous for vectors and tensors.
We use $\underline{Q}$ to denote 
an extension of $Q$ with one block row of zeros. 
 The size of the block will be clear by the context. 




\section{Derivation of the NEP}\label{sect:discretization}


%
%
%

\subsection{DtN reformulation}

As a first step in deriving a computational
approach to \eqref{eq:wg}, we
 rephrase the problem on a bounded domain
$S_0:=[x_-,x_+]\times [0,1]\subset S$
by introducing artificial boundary conditions at $x=x_\pm$.
We use a construction with so-called
Dirichlet-to-Neumann (DtN) maps
which  relate the (normal) derivative 
of the solution at the boundary with the function value at the boundary.
The main concepts of DtN maps
are presented in  various generality settings in, e.g., 
\cite{Tausch:2000:WAVEGUIDE,Keller:1989:EXACT,Harari:1998:DIRICHLET,Fliss:2013:DIRICHLET,Givoli:2004:DISPERSIVE}.
We use the same DtN maps as in \cite{Tausch:2000:WAVEGUIDE}, but we 
use a different discretization and we need
to derive
some results necessary for our setting.

The DtN formulation of the eigenvalue problem \eqref{eq:wg} 
is given as follows. Find $\gamma$ and $u\in H^1(S_0)$
such that
\begin{subequations}\label{eq:wg_finite}
\begin{align}
     \Delta u(x,z)+2\gamma u_z(x,z)+(\gamma^2+\kappa(x,z)^2)u(x,z)
     &=0,
&(x,z)&\in S_0,\\
u(x,0)&=u(x,1), &x&\in(x_-,x_+),\\
u_z(x,0)&=u_z(x,1),  & x&\in(x_-,x_+),\\
\mathcal{T}_{-,\gamma}[u(x_-,\cdot)]&=-u_x(x_-,\cdot),{\hskip -10 mm}&&\label{eq:wg_finite_T1}\\
\mathcal{T}_{+,\gamma}[u(x_+,\cdot)]&=u_x(x_+,\cdot),{\hskip -10 mm}&&\label{eq:wg_finite_T2}
\end{align}
\end{subequations}
where $\mathcal{T}_{\pm,\gamma}:H^{1}([0,1])\mapsto L^2([0,1])$
are the DtN maps, defined by 
%
\begin{equation}\label{eq:DtNdef}
\mathcal{T}_{\pm,\gamma}[g](z):=\sum_{k\in{\mathbb Z}} 
s_{\pm,k}(\gamma)
g_k e^{2\pi i k z},
\end{equation}
where $[g_k]_{k\in \ZZ}$ is the Fourier expansion of $g$, i.e., 
$g(z):=\sum_{k\in{\mathbb Z}} g_k e^{2\pi i k z}$ and
\begin{eqnarray}
  \beta_{\pm,k}(\gamma) &:=& 
(\gamma+2i\pi k)^2+\kappa^2_\pm=((\gamma+2i\pi k)+i\kappa_\pm)((\gamma+2i\pi k)-i\kappa_\pm)
,\label{eq:betadef}\\
  s_{\pm,k}(\gamma)&:=&
\sign(\im\left(\beta_{\pm,k}(\gamma)\right))i\sqrt{\beta_{\pm,k}(\gamma)}.\label{eq:sk}
\end{eqnarray}
In this section we show that,
under the assumption that neither the real nor the imaginary part of $\gamma$
vanish,
the DtN maps are well-defined and
the problems \eqref{eq:wg} 
 and \eqref{eq:wg_finite}
are equivalent. 
In order to characterize the
DtN maps, we consider the exterior problems,
i.e., the 
problems corresponding to the 
domains
$S_+ =(x_+,\infty)\times (0,1)$
and $S_- =(-\infty,x_-)\times (0,1)$.
The exterior problems are defined
as the two problems corresponding
to finding $w\in H^1(S_\pm)$ 
such that, for a given $g$,
\begin{subequations}\label{eq:wg_exterior}
\begin{align}
     \Delta w+2\gamma w_z+(\gamma^2+\kappa_\pm^2)w&=0, 	&	(x,z)\in S_\pm, \label{eq:wg_exterior1}\\
        w(x,0)&=w(x,1),     \\
        w_z(x,0)&=w_z(x,1), \\
        w(x_\pm,z)&=g(z). 
\end{align}
\end{subequations}
\begin{remark}[Regularity]\label{remark:smooth}
Note that if we multiply a solution 
to \eqref{eq:wg},
\eqref{eq:wg_finite} or \eqref{eq:wg_exterior}
with $e^{\gamma z}$, we have a solution to the Helmholtz
equation, i.e., it satisfies \eqref{eq:helmholtz} in their respective 
domains, i.e. $S$, $S_0$ and $S_\pm$. 
By assumption, solutions 
to \eqref{eq:wg},
\eqref{eq:wg_finite} and \eqref{eq:wg_exterior}  are $C^1$ and 
the traces taken on $x=x_\pm$
and its first derivatives
are always well-defined and continuous. 
Moreover, for $x<x_-$
and 
$x>x_+$,
since $\kappa(x,z)$ is constant, the problem
can be interpreted in a strong sense and the solutions
are in $C^\infty$. 

Our assumption that the solution has regularity $C^1$ can 
be relaxed as follows. If we select $x_-$ and $x_+$ such that
 $\kappa$ is constant over $x_-$ and $x_+$, we have 
by elliptic regularity \cite[Section 6.3.1, Theorem 1]{Evans:2010:PDEs}, that
weak $H^1$ solutions of \eqref{eq:wg},
\eqref{eq:wg_finite} and \eqref{eq:wg_exterior}
are in $H^2_{\rm loc}$. This means that traces taken on $x=x_\pm$
of a solution and its derivatives
are always well-defined and smooth, without explicitly assume 
that the solution is in $C^1$.


%

\end{remark}
The following result illustrates that the application of the DtN maps 
in \eqref{eq:DtNdef} is in a sense 
equivalent to solving the exterior problems and evaluating
the solutions in the normal direction at
the boundary $x=x_\pm$.
More precisely, the following lemma shows that
if $\Re \gamma\neq 0$ and $\Im \gamma\in(-2\pi,0)$ the problems are well-posed in $H^1(S_\pm)$ and 
the boundary relations \eqref{eq:wg_finite_T1} and
\eqref{eq:wg_finite_T2} are satisfied. The proof
is available in Appendix~\ref{sect:DtNproof}.
\begin{lemma}[Characterization of DtN maps] \label{thm:DtN}
Suppose $\Re\gamma\neq 0$,
and $\Im\gamma\not\in 2\pi\ZZ$ 
 and $g\in H^{1/2}([0,1])$. 
Then, each of the  exterior problems \eqref{eq:wg_exterior}
have a unique solution $w\in H^1(S_\pm)$.
 Moreover,
there is a constant $C$ independent of $g$ such that
\begin{equation}\label{eq:uest}
  ||w||_{H^1(S_\pm)}\leq C ||g||_{H^{1/2}([0,1])}.
\end{equation}
If it is further assumed that $g\in H^{1}([0,1])$, then 
the DtN maps in \eqref{eq:DtNdef} are well-defined and satisfy
\begin{equation}\label{eq:DtNrel}
{\mathcal T}_{+,\gamma} [w(x_+,\cdot)](z)=
w_x(x_+,z),\qquad
{\mathcal T}_{-,\gamma} [w(x_-,\cdot)](z)=
-w_x(x_-,z).
\end{equation}
\end{lemma}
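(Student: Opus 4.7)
The plan is to reduce the exterior problem \eqref{eq:wg_exterior} to a decoupled family of scalar ODEs via Fourier expansion in the periodic variable $z$. Writing
\[
w(x,z) = \sum_{k\in\mathbb{Z}} w_k(x)\,e^{2\pi i k z},\qquad g(z) = \sum_{k\in\mathbb{Z}} g_k\,e^{2\pi i k z},
\]
and substituting into \eqref{eq:wg_exterior1} (which has constant coefficients on $S_\pm$ since $\kappa\equiv\kappa_\pm$ there), the $z$-periodicity conditions are automatic, and each Fourier mode satisfies the boundary-value problem
\[
w_k''(x) + \beta_{\pm,k}(\gamma)\,w_k(x) = 0,\qquad w_k(x_\pm) = g_k,
\]
with $\beta_{\pm,k}$ exactly as in \eqref{eq:betadef}. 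Its general solution is a linear combination of $e^{\pm s_{\pm,k}(\gamma)(x-x_\pm)}$ with $s_{\pm,k}$ from \eqref{eq:sk}, and the $H^1$-decay requirement ($x\to+\infty$ on $S_+$, $x\to-\infty$ on $S_-$) singles out a unique branch.

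The first technical step is to verify that the sign factor $\mathrm{sign}(\Im(\beta_{\pm,k}))$ in \eqref{eq:sk} indeed selects the mode decaying at infinity for every $k$. With $a=\Re\gamma$ and $b_0=\Im\gamma$, the factorization in \eqref{eq:betadef} gives $\Im(\beta_{\pm,k}(\gamma)) = 2a(b_0 + 2\pi k)$, so the hypotheses $\Re\gamma\ne 0$ and $\Im\gamma\notin 2\pi\mathbb{Z}$ imply $\Im(\beta_{\pm,k})\ne 0$ and $\beta_{\pm,k}\ne 0$ for every $k$, making the sign well-defined; a direct check on the principal branch of $\sqrt{\,\cdot\,}$ then shows that this sign choice is precisely what yields $\Re(s_{\pm,k})<0$ strictly. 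This identifies the unique $H^1$-decaying solutions
\[
w_k(x) = g_k\,e^{\,s_{+,k}(\gamma)(x - x_+)}\ \text{on } S_+,\qquad w_k(x) = g_k\,e^{-s_{-,k}(\gamma)(x - x_-)}\ \text{on } S_-,
\]
and in particular delivers uniqueness, since $g\equiv 0$ forces every $w_k\equiv 0$. Parseval in $z$ followed by integration in $x$ then yields
\[
\|w\|_{H^1(S_\pm)}^2 \;=\; \sum_{k\in\mathbb{Z}} \frac{\bigl(1 + |s_{\pm,k}|^2 + (2\pi k)^2\bigr)\,|g_k|^2}{-2\,\Re(s_{\pm,k})},
\]
and \eqref{eq:uest} follows from the asymptotic $s_{\pm,k}(\gamma) = -2\pi|k|(1 + O(1/|k|))$ as $|k|\to\infty$ together with a uniform positive lower bound on $|\Re(s_{\pm,k})|$ for small $|k|$ (a finite set on which each value is already strictly positive), which together bound each summand by $C(1+|k|)|g_k|^2$, i.e., by $C'\|g\|_{H^{1/2}([0,1])}^2$.

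Finally, for the DtN identity under the stronger assumption $g\in H^1([0,1])$, the bound $\sum_k(1+k^2)|g_k|^2<\infty$ combined with $|s_{\pm,k}|=O(|k|)$ shows that $\mathcal{T}_{\pm,\gamma}[g]\in L^2([0,1])$ and that the series for $w_x$ converges absolutely on compact neighbourhoods of $x_\pm$, justifying termwise differentiation. Evaluating at $x=x_\pm$ yields $w_x(x_+,z)=\sum_k s_{+,k}g_k e^{2\pi i k z}=\mathcal{T}_{+,\gamma}[g](z)$ and $w_x(x_-,z)=-\sum_k s_{-,k}g_k e^{2\pi i k z}=-\mathcal{T}_{-,\gamma}[g](z)$, which are exactly the two relations in \eqref{eq:DtNrel}. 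The main obstacle I anticipate is the branch/sign analysis together with producing a $k$-uniform lower bound on $|\Re(s_{\pm,k})|$ strong enough to match the $H^{1/2}$ norm of $g$; once these are in hand, the remainder reduces to routine Parseval bookkeeping.
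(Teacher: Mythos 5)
Your proof is correct and follows essentially the same route as the paper: Fourier expansion in $z$, reduction to the scalar ODE $w_k''+\beta_{\pm,k}w_k=0$, selection of the decaying branch via the sign of $\Im\beta_{\pm,k}$, Parseval to compute $\|w\|_{H^1}^2$ as a weighted $\ell^2$ sum, and the asymptotics $-\Re(s_{\pm,k})\sim 2\pi|k|$ (equivalently $|\Im\sqrt{\beta_{\pm,k}}|\sim 2\pi|k|$) together with nonvanishing at finitely many small $|k|$ to close the estimate. The only cosmetic difference is that you work directly with $s_{\pm,k}$ where the paper phrases the same quantities via $\sqrt{\beta_k}$, and you phrase uniqueness through the Fourier coefficients while the paper invokes linearity plus the a priori bound; both are the same argument.
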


This lemma immediately implies the equivalence between
\eqref{eq:wg} and \eqref{eq:wg_finite} under 
the same conditions on $\gamma$. 

\begin{thm}[Equivalence of \eqref{eq:wg} and \eqref{eq:wg_finite}]
Suppose $\Re\gamma\neq 0$ and
$\Im \gamma \not\in 2\pi\ZZ$.
Then $u\in H^1(S_0)$ is a solution to \eqref{eq:wg_finite}
if and only if there exists a solution $\hat{v}\in H^1(S)$
to \eqref{eq:wg} such that $u$ is the restriction of $\hat{v}$ to $S_0$.
\end{thm}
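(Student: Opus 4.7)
The plan is to prove each direction by either restricting or gluing solutions at the artificial boundaries $x=x_\pm$, using Lemma \ref{thm:DtN} as the bridge between the PDE on the exterior subdomains and the DtN boundary conditions.

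For the forward direction, suppose $\hat{v}\in H^1(S)$ solves \eqref{eq:wg}, and let $u := \hat{v}|_{S_0}$. Then $u\in H^1(S_0)$ and inherits the PDE on $S_0$ and the $z$-periodicity trivially. The restrictions $\hat{v}|_{S_\pm}$ solve the exterior problems \eqref{eq:wg_exterior} with boundary data $g_\pm(z) := \hat{v}(x_\pm, z)$, which lie in $H^1([0,1])$ by the regularity discussed in Remark \ref{remark:smooth}. Applying \eqref{eq:DtNrel} from Lemma \ref{thm:DtN} to these exterior solutions, and using the continuity of $\hat{v}$ and $\hat{v}_x$ across $x=x_\pm$, yields \eqref{eq:wg_finite_T1} and \eqref{eq:wg_finite_T2} for $u$.

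For the reverse direction, suppose $u\in H^1(S_0)$ solves \eqref{eq:wg_finite}. Set $g_\pm(z) := u(x_\pm, z)$, which again belong to $H^1([0,1])$ by Remark \ref{remark:smooth}. Use Lemma \ref{thm:DtN} to obtain unique solutions $w_\pm \in H^1(S_\pm)$ of \eqref{eq:wg_exterior} with these boundary data, with norms controlled by \eqref{eq:uest}. Paste them together by defining $\hat{v}$ to equal $w_-$ on $S_-$, $u$ on $S_0$, and $w_+$ on $S_+$. The traces of $\hat{v}$ from each side coincide on $\{x=x_\pm\}$ by construction, so $\hat{v}\in H^1(S)$; the $z$-periodicity is inherited from the three pieces.

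The main obstacle is verifying that the pasted $\hat{v}$ satisfies the PDE \eqref{eq:wg0} weakly across the interfaces $\{x=x_\pm\}$ and not just inside each piece. Testing the weak formulation against a smooth $z$-periodic test function $\varphi$ with compact support in $S$, and integrating by parts on each of $S_-$, $S_0$, $S_+$ separately, produces interface terms of the form $\int_0^1 \varphi(x_+, z)\bigl((w_+)_x(x_+, z) - u_x(x_+, z)\bigr)\,dz$ at $x_+$, and analogously at $x_-$. Combining \eqref{eq:DtNrel} applied to $w_+$ with the boundary relation \eqref{eq:wg_finite_T2} yields $(w_+)_x(x_+,\cdot) = \mathcal{T}_{+,\gamma}[u(x_+,\cdot)] = u_x(x_+,\cdot)$, so the interface term at $x_+$ vanishes, and symmetrically at $x_-$. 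Consequently $\hat{v}$ is a global weak $H^1$-solution of \eqref{eq:wg}, and by construction $\hat{v}|_{S_0}=u$. The delicate point is justifying the trace and integration-by-parts steps at only $H^1$ regularity, which is handled by the $H^2_{\rm loc}$ improvement from Remark \ref{remark:smooth}, since $\kappa$ is constant in a neighborhood of $x_\pm$.
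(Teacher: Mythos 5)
Your proof is correct and follows essentially the same approach as the paper: use Lemma~\ref{thm:DtN} together with the regularity assumptions in Remark~\ref{remark:smooth} to identify the exterior restrictions with the unique exterior solutions (forward direction), and glue the unique exterior solutions to $u$ using the DtN boundary conditions to match normal derivatives across $x=x_\pm$ (reverse direction). The one place you add detail the paper glosses over is the reverse direction's claim that the glued $\hat v$ satisfies the PDE weakly across the interfaces: the paper simply asserts this holds ``by construction'' after noting the normal derivatives match, whereas you spell out the integration-by-parts argument showing the interface boundary terms cancel precisely because $(w_\pm)_x(x_\pm,\cdot)=\mathcal{T}_{\pm,\gamma}[u(x_\pm,\cdot)]=\pm u_x(x_\pm,\cdot)$. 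That expansion is a welcome clarification but not a different route.
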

\begin{proof}
Suppose $\hat{v}$ is a solution of \eqref{eq:wg}
and $u$ is its restriction to $S_0$.
Then $u$ clearly satisfies (\ref{eq:wg_finite}a-c).
By Remark~\ref{remark:smooth} the functions $\hat{v}(x_\pm,z)=u(x_\pm,z)$
are in $C^1([0,1])\subset H^1([0,1])$. 
Lemma~\ref{thm:DtN} shows that 
$\hat{v}$, restricted to $S_{\pm}$, are the unique
solutions to the exterior problems \eqref{eq:wg_exterior}.
Hence, $\hat{v}$ is identical to the union of $u$ and 
 the solutions to the exterior problems \eqref{eq:wg_exterior}. 
 Since $\hat{v}\in C^1(S)$, we have
that $\hat{v}_x(x_\pm,z)$ is continuous and $w_x(x_\pm,z)=u_x(x_\pm,z)=\hat{v}_x(x_\pm,z)$. 
Moreover, due to \eqref{eq:DtNrel}, 
the boundary conditions (\ref{eq:wg_finite}d-e) 
are satisfied.
 

On the other hand, suppose $u\in H^1(S_0)$ is 
a weak solution to \eqref{eq:wg_finite}.
Remark~\ref{remark:smooth} again implies that 
$u\in C^1(S_0)$ and in particular 
$u(x_\pm, z)\in C^1([0,1])\subset H^{1}([0,1])$.
We have from Lemma~\ref{thm:DtN} 
that the exterior problems \eqref{eq:wg_exterior} have unique solutions $w$ that satisfy \eqref{eq:DtNrel}.
Let $\hat{v}$ be defined as the union of the $u$ and $w$. 
The union $\hat{v}$ has a continuous derivative
on the boundary $x=x_\pm$ due to \eqref{eq:wg_finite}d-e and \eqref{eq:DtNrel} and 
since $u\in H^1(S_0)$ and $w\in H^1(S_\pm)$, then 
$\hat{v}\in H^1(S)$
and satisfies \eqref{eq:wg} by construction.
\end{proof}

\begin{remark}[Conditions on $\gamma$]
Modes with $\Re\gamma=0$ are propagating. 
For those modes, the well-posedness
of the DtN-maps depends on the wave number.  
See \cite{Fliss:2013:DIRICHLET} for precise results about
well-posedness in the situation $\Re\gamma=0$.
In our setting we only consider leaky modes and $\re\gamma<0$.
The situation $\re\gamma>0$ can be treated analogously.
\end{remark}

\subsection{Discretization}\label{sect:intdisc}
We discretize the finite-domain PDE \eqref{eq:wg_finite} 
with a finite-element approach. 
The domain $[x_-,x_+]\times[0,1]$ is partitioned using rectangular elements 
obtained with 
a uniform distribution of nodes in the $x$ and $z$ directions.
We use $n_x$ grid points in the $x$-direction
and $n_z$ grid points in the $z$-direction and define 
$x_i=x_-+i h_x$ and
$z_j=jh_z$ 
where 
$i=1,\ldots,n_x$,
$j=1,\ldots,n_z$,
$h_x=\frac{x_+-x_-}{n_x+1}$ and
$h_z=\frac{1}{n_z}$. 
The basis functions are chosen as piecewise bilinear functions that are periodic in the 
$z$ direction with period $1$. In particular, the basis functions that we consider are
 periodic modification of the standard basis functions. 
We denote them as
$\left \{ \phi_{i,j}(x,z) \right \}_{i=1,j=1}^{n_x,n_z},\left \{ \phi_{\pm,j}(x,z) \right \}_{j=1}^{n_z}$ 
where 
$\phi_{i,j}(x_t,z_s) = \delta_{i,s} \delta_{j,t}$, 
$\phi_{-,j}(x_-,z_s) = \delta_{j,s}$ and 
$\phi_{+,j}(x_+,z_s) = \delta_{j,s}$.
The finite-domain PDE \eqref{eq:wg_finite} can be rewritten in weak form
\begin{equation}	\label{eq:weak_PDE}
 a(u, \phi) + \gamma b(u,\phi) + 
\gamma^2 c(u,\phi)  = 0
\hspace{0.5cm}	\forall \phi \in H^1(S_0)
\end{equation}
where $a,b$ and $c$ are bilinear operators. 
The approximation of a solution of \eqref{eq:weak_PDE} 
can be expressed as 
\begin{equation} \label{eq:solution_FEM}
 \tilde{u}(x,z) = 
 \sum_{i=1}^{n_x}  \sum_{j=1}^{n_z} u_{i,j} \phi_{i,j} (x,z) 		+
 \sum_{j=1}^{n_z}  u_{-,j} \phi_{-,j} (x,z)					+
 \sum_{j=1}^{n_z}  u_{+,j} \phi_{+,j} (x,z).			
\end{equation}
We represent the coefficients that defines $\tilde{u}(x,z)$ in a compact way
\[
\hat{u} = \operatorname{vec}
\begin{bmatrix}
u_{1,1} 		& 	\cdots 	& 	u_{n_x,1}  	\\
  \vdots 		&  		&	\vdots  	\\
u_{1,{n_z}}		& 	\cdots  & 	u_{n_x,n_z} 	\\
\end{bmatrix},\;
\hat{u}_-=
\begin{bmatrix}
u_{-,1}	\\
\vdots 		\\
u_{-,n_z}	\\
\end{bmatrix},\;
\hat{u}_+=
\begin{bmatrix}
u_{+,1}	\\
\vdots 		\\
u_{+,n_z}	\\
\end{bmatrix}.
\]
such that the Ritz--Galerkin discretization of \eqref{eq:weak_PDE} leads to the following relation 
\begin{equation}
Q(\gamma) \hat{u} +
C_1(\gamma)	\begin{bmatrix}
		\hat{u}_-\\
		\hat{u}_+
		\end{bmatrix}=0,
\label{eq:intdisc}
\end{equation}
where 
\begin{align*}
Q(\gamma)	& :=	A_0+\gamma A_1+\gamma^2A_2,			\\
C_1(\gamma)	& :=	C_{1,0}+\gamma C_{1,1}+\gamma^2 C_{1,2}.	
\end{align*}
The matrices $(A_i)_{i=0}^2$ and $(C_{1,i})_{i=0}^2$ can be computed in an efficient and explicit way\footnote{The 
matrices are available online in order to make the
results easily reproducible:
\url{http://people.kth.se/~gmele/waveguide/}} with the 
procedure outlined in Appendix~\ref{sect:fem}.

Two approximations must be done in order to 
incorporate the boundary conditions.
We construct approximations of
the right-hand side of \eqref{eq:wg_finite}d-e 
using the one-sided second-order finite-difference approximation,
\begin{equation} \label{eq:oneside_approx}
\begin{bmatrix} 
  - u_x(x_-,z_1)\\
  \vdots\\
  - u_x(x_-,z_{n_z})
\end{bmatrix}
\approx  -C_{2,-}^T\hat{u}-d_0\hat{u}_-  
\textrm{ and  }
\begin{bmatrix}
  u_x(x_+,z_1)\\
  \vdots\\
  u_x(x_+,z_{n_z})
\end{bmatrix}
\approx  -C_{2,+}^T\hat{u}-d_0\hat{u}_+  
\end{equation}
where $C_{2,-}^T=
(d_1,d_2,0,\ldots,0)\otimes I_{n_z}\in\CC^{n_z\times n_zn_x}$
and $C_{2,+}^T= 
((0,\ldots,0,d_2,d_1)\otimes I_{n_z})\in\CC^{n_z\times n_zn_x}
$ with  $d_0=-\frac{3}{2h_x}$, $d_1=\frac{2}{h_x}$ and  $d_2=-\frac{1}{2h_x}$.
The DtN maps in the left-hand side of (\ref{eq:wg_finite}d-e)
act on the function values on the
 boundary only, i.e., the function approximated by $\hat{u}_\pm$. 
We compute the first $p$ Fourier coefficients
of the approximated function, apply the definition of 
$\mathcal{T}_{\pm,\gamma}$
on the Fourier coefficients,
and convert the Fourier expansion back to 
the uniform grid. 
More precisely,
the approximation of
the left-hand side of \eqref{eq:wg_finite}d-e
is given by
\begin{equation}\label{eq:Tapprox}
\left[\left.\mathcal{T}_{\pm,\gamma}(u(x_\pm,z))\right|_{z=z_i}\right]_{i=1}^{n_z}
\approx RL_{\pm}(\gamma)R^{-1}\hat{u}_\pm\in\CC^{n_z}
\end{equation}
where 
 $L_\pm(\gamma)=\diag([s_{j,\pm}(\gamma)]_{j=-p}^p)$, and 
$R=[\exp(2i\pi j z_k)]_{k=1,j=-p}^{n_z,p}$ 
with $n_z=2p+1$. In the algorithm we exploit
that the action of $R$ and $R^{-1}$ 
can be computed with FFT. 
We match \eqref{eq:Tapprox} and \eqref{eq:oneside_approx}
and get a discretization of the boundary condition
\eqref{eq:wg_finite}d-e. 
That is, we reach
the NEP \eqref{eq:nep},
with $M$ given by \eqref{eq:Mdef}
if
we define
$C_2:=\begin{bmatrix}C_{2,-} & C_{2,+}\end{bmatrix}$,
$\Lambda_{\pm}(\gamma):=L_{\pm}+d_0 I$
and $w^T:=\begin{bmatrix}\hat{u}^T & \hat{u}_-^T & \hat{u}_+^T\end{bmatrix}$
and combine \eqref{eq:intdisc}
with \eqref{eq:Tapprox} and \eqref{eq:oneside_approx}.

\section{Derivation and adaption of TIAR}\label{sect:adaption}
\subsection{Basis matrix structure of the infinite Arnoldi method (IAR)}
There exists several variations of IAR, \cite{Jarlebring:2010:DELAYARNOLDI,Jarlebring:2012:INFARNOLDI}.
 We use the
variant of IAR
in \cite{Jarlebring:2012:INFARNOLDI}
called the Taylor variant, as it is based on the
Taylor coefficients (derivatives) of $M$.
We briefly summarize the algorithm
and
characterize a structure in the basis matrix.
Similar to the standard Arnoldi method, IAR is
an algorithm with an algorithmic state
consisting of a basis matrix $Q_k$ and a Hessenberg
matrix $H_k$. The
basis matrix and the Hessenberg matrix are expanded in every loop.
Unlike the standard Arnoldi method, in IAR, 
the basis matrix is 
expanded by a block row as well as a 
column,
leading to a basis matrix with block triangular structure, 
where the leading (top left) submatrix of the basis 
matrix is the basis matrix of the previous loop. 
More precisely, there 
exist vectors  $q_{i,j}\in\CC^n$, 
$i,j=1,\ldots,k$ such that 
\begin{equation}  \label{eq:Vkblocks}
 Q_k=
 \begin{bmatrix}
   q_{1,1} & q_{1,2} & \cdots   & q_{1,k}  \\
     0    &  q_{2,2}&      &  \\
     \vdots     & \ddots &   \ddots   &\vdots  \\
     0     &  \cdots   & 0 & q_{k,k} 
 \end{bmatrix}.
\end{equation}

In every loop in IAR
we must compute a new vector to be used in the expansion of $Q_k$ and $H_k$.
In practice, in iteration $k$, this reduces
to computing $y_1\in\CC^n$ given $y_2,\ldots,y_{k+1}$ 
such that 
\begin{equation} \label{eq:y0}
  y_1=-M(0)^{-1}\left(
\sum_{i=1}^{k}
M^{(i)}(0)
y_{i+1}
\right).
\end{equation}
Clearly, since $M(0)$ does
not change throughout the iteration, and
we can compute an LU-factorization before
starting the algorithm, such that the linear system can be
solved efficiently in every iteration. IAR (Taylor version) 
is for completeness given by algorithm~\ref{alg:infarnoldi}.

\begin{algorithm} 
\caption{Infinite Arnoldi method - IAR (Taylor version) \cite{Jarlebring:2012:INFARNOLDI}\label{alg:infarnoldi}}
\SetKwInOut{Input}{input}\SetKwInOut{Output}{output}
\Input{$x_1\in\CC^n$}
\BlankLine
\nl Let $Q_1=x_1/\|x_1\|_2$,  $H_0=$empty matrix\\
\For{$k=1,2,\ldots,m$ }{
\nl Compute $y_2,\ldots,y_{k+1}$  from the last column of $Q_k$ 
by setting 
\[
  y_j=\frac{1}{j-1}q_{j-1,k}
\]
for $j=2,\ldots,k+1$.\\
\nl Compute $y_1$ from $y_2,\ldots,y_{k+1}$
using \eqref{eq:y0}\\
\nl Let $y:=\vect(y_1,\ldots,y_{k+1})$ and  $\underline{Q}_k:=\begin{bmatrix}
Q_k\\0\end{bmatrix}\in\CC^{(n(k+1))\times k}$.\label{step:infarn:y}\\
\nl Compute $h=\underline{Q}_k^H y$\label{step:infarn:h}\\
\nl Compute $y_{\perp} = y - \underline{Q}_k h$\label{step:infarn:yperp}\\
\nl Possibly repeat Steps \ref{step:infarn:h}-\ref{step:infarn:yperp} and get new $h$ and $y_\perp$\\
\nl Compute $\beta = \|y_\perp\|_2$ \\
\nl Compute $q_{k+1} = y_\perp / \beta$\\
\nl Let $H_k = \left[\begin{array}{cc}H_{k-1} & h \\ 0 & \beta \end{array}\right] \in\CC^{(k+1)\times k}$\\
\nl Expand $Q_k$ into $Q_{k+1} = [\underline{Q}_k,q_{k+1}]$\\
}
\nl Compute the eigenvalues $\{\mu_i\}_{i=1}^m$ of the leading $m\times m$ submatrix
of the Hessenberg matrix $H_k$	\\
\nl return approximations $\{1/\mu_i\}_{i=1}^k$\\
\end{algorithm}

\begin{figure}[ht]
  \begin{center}
    \scalebox{0.8}{\includegraphics{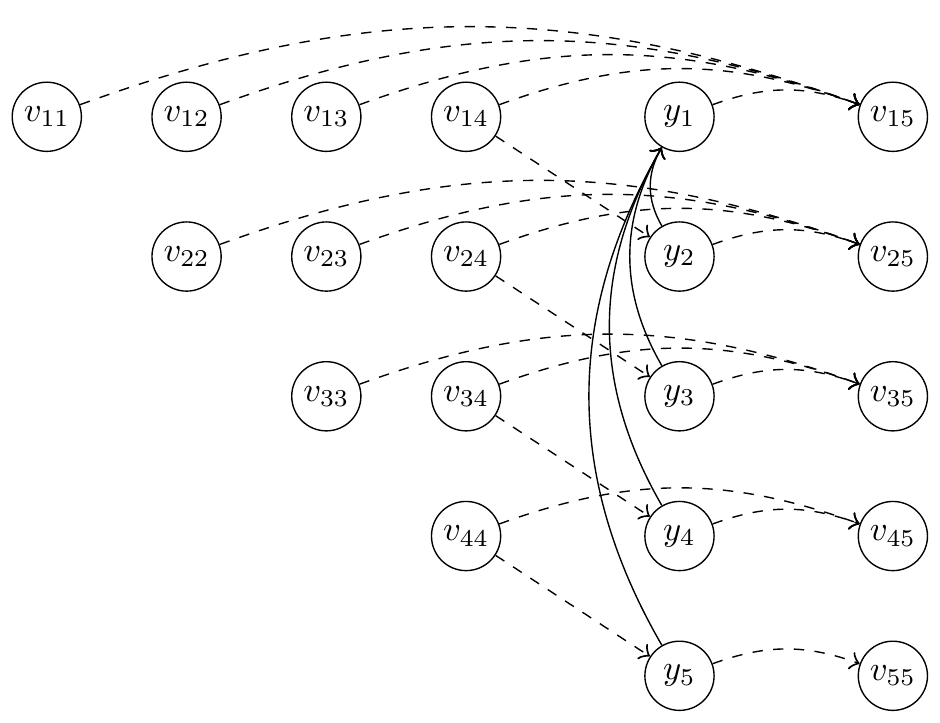}}
    \caption{ 
        The computation tree 
representing Steps 3-9 for Algorithm~\ref{alg:infarnoldi}
when $k=4$, i.e., after three iterations. 
Every node is vector of size $n$. 
The dashed lines correspond to computing linear
combinations. Clearly,
the only (potentially) new direction 
of the span of all vectors can be represented by $y_1$. 
\label{fig:comp_tree}
    }
  \end{center}
\end{figure}

Steps 3-9 of Algorithm~\ref{alg:infarnoldi}
are visualized in Figure~\ref{fig:comp_tree} when $k=4$, i.e., after three iterations. We have 
marked those operations that are linear combinations as
dashed lines. The fact that
the many operations are linear combinations leads to a structure in $Q_k$ which can be exploited 
such that we can reduce the 
usage of computer resources (memory and computation time) 
and maintain an equivalence with Algorithm~\ref{alg:infarnoldi}. 

More precisely, the block elements of the basis matrix $Q_k$ have
 the following structure.
\begin{lem}[Structure of basis matrix] \label{thm:vij_tensor}
Let $Q_i$, $i=1,\ldots,k$  be the sequence of
basis matrices generated during
the execution of 
 $k-1$ iterations of Algorithm~\ref{alg:infarnoldi}.
Then, all block elements of the basis matrix $Q_k$
(when partitioned as \eqref{eq:Vkblocks}) 
are linear combinations of $q_{1,1},\ldots,q_{1,k}$.
\end{lem}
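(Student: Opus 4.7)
The plan is to prove the lemma by induction on $k$. The base case $k=1$ is immediate, since $Q_1$ contains only the single block $q_{1,1}$, which is trivially a linear combination of $q_{1,1}$.

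For the inductive step, assume that after $k-1$ iterations every block $q_{i,j}$ of $Q_k$ (for $1\le i\le j\le k$) is a linear combination of $q_{1,1},\ldots,q_{1,k}$. I would track the blocks of the newly generated column $q_{k+1}=[q_{1,k+1}^T,\ldots,q_{k+1,k+1}^T]^T$ as they are built in Steps~2--9. First, Step~2 defines $y_j = \tfrac{1}{j-1}q_{j-1,k}$ for $j=2,\ldots,k+1$, so each such $y_j$ is a scalar multiple of a block in the last column of $Q_k$, and therefore by the inductive hypothesis already lies in the span of $q_{1,1},\ldots,q_{1,k}$. The remaining entry $y_1$ is obtained from \eqref{eq:y0} and a priori need not lie in this span; it is the ``new direction'' in Figure~\ref{fig:comp_tree}.

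Next, I would read off the blockwise form of Steps~5--6. Since $\underline{Q}_k$ is $Q_k$ padded with a zero block row and preserves the block upper-triangular structure, the $i$th block of $\underline{Q}_k h$ is $\sum_{j=i}^{k} h_j q_{i,j}$ for $i\le k$ and zero for $i=k+1$. Consequently, writing $q_{k+1}=(y-\underline{Q}_k h)/\beta$, the blocks satisfy
\begin{equation*}
q_{i,k+1} = \frac{1}{\beta}\Bigl( y_i - \sum_{j=i}^{k} h_j\, q_{i,j} \Bigr),\qquad i=1,\ldots,k,
\end{equation*}
and $q_{k+1,k+1}=y_{k+1}/\beta$. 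For $i\ge 2$, both $y_i$ and each $q_{i,j}$ with $j\ge i$ already lie in $\operatorname{span}\{q_{1,1},\ldots,q_{1,k}\}$ by the inductive hypothesis and by the observation on Step~2, so $q_{i,k+1}$ does as well, which certainly places it in $\operatorname{span}\{q_{1,1},\ldots,q_{1,k+1}\}$. For $i=1$, the block $q_{1,k+1}$ is trivially a linear combination of $q_{1,1},\ldots,q_{1,k+1}$ because it equals one of them. A possible repetition of orthogonalization (Step~7) only replaces $h$ by a different vector $h'$ and does not affect this argument.

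There is no real obstacle: the argument is essentially bookkeeping once one notices the crucial fact that Step~2 uses only the last column of $Q_k$ and shifts its blocks up by one position. This is exactly what allows the induction to propagate, since the upper-triangular structure of $\underline{Q}_k$ confines the linear combination in Step~6 to blocks that, row by row, already belong to $\operatorname{span}\{q_{1,1},\ldots,q_{1,k}\}$. The only element carrying genuinely new information is $q_{1,k+1}$, consistent with the dashed-line analysis of Figure~\ref{fig:comp_tree}; this observation is also what will subsequently permit the compact tensor representation exploited by TIAR.
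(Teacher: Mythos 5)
Your proof is correct and follows essentially the same induction-on-$k$ argument as the paper, with the base case $k=1$ trivial and the inductive step tracking the blocks of the newly added column. You are in fact somewhat more careful than the paper's terse proof: by writing $q_{i,k+1}=\tfrac{1}{\beta}\bigl(y_i-\sum_{j=i}^{k}h_j q_{i,j}\bigr)$ you make explicit that the orthogonalization mixes in blocks from \emph{all} columns of $Q_k$, not just the last, and that these are nonetheless handled by the inductive hypothesis.
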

\begin{proof}
The proof is based on induction over the iteration count $k$.
The result is trivial for $k=1$.
Suppose the results holds for some $k$.
Due to the fact that $Q_k$ is the leading submatrix of $Q_{k+1}$, as in \eqref{eq:Vkblocks},
we only need to show that the blocks of the
new column are a linear combinations 
$q_{i,j}$, $i, j=1,\ldots,k$. 
This follows
directly from the fact that  $q_{2,k+1},\ldots,q_{k+1,k+1}$
is (in step 3-9 in Algorithm~\ref{alg:infarnoldi})
constructed as linear combination
of $q_{1,k},\ldots,q_{k,k}$. See Figure~\ref{fig:comp_tree}
\end{proof}

 We note that
the structure presented in Lemma~\ref{thm:vij_tensor} 
is very natural in view of similar structures
in other settings \cite[Section~3.1]{Kressner:2014:TOAR},
\cite[Page 1057]{Zhang:2013:MOR} and \cite[Theorem~4.4]{VanBeeumen:2014:CORK}.

\subsection{Derivation of TIAR}

We now know from Lemma~\ref{thm:vij_tensor} 
that the basis matrix in IAR has a redundant structure.
In this section we show 
that this structure can be exploited such that Algorithm~\ref{alg:infarnoldi} can be equivalently 
reformulated as an iteration involving a tensor factorization of the basis matrix without redundancy. 
%
We present a different formulation 
involving a factorization with a tensor which allows
us to improve IAR both 
in terms of memory and computation time.
This equivalent, but improved, version of
Algorithm~\ref{alg:infarnoldi}
appears to be competitive in general, and
can be considerably specialized to the waveguide eigenvalue
problem as we show in section~\ref{sect:adaption}.

More precisely, Lemma~\ref{thm:vij_tensor}
implies that there exists  
$a_{i,j,\ell}$ for $i,j,\ell=1,\ldots,k$ such that 
\begin{equation}
   q_{i,j}=\sum_{\ell=1}^ka_{i,j,\ell} z_\ell,
\textrm{ for }\,i,j=1,\ldots,k
\label{eq:vij_tensor}
\end{equation}
where $z_1,\ldots,z_k$ is a basis of 
the span of the $k$ first columns of the first block row, i.e., 
$\operatorname{span}(q_{1,1},\ldots,q_{1,k})=\operatorname{span}(z_{1},\ldots,z_{k})$.
Due to \eqref{eq:vij_tensor}, 
the quantities $[z_1,\ldots,z_k]$, 
$a_{i,j,\ell}$ for $i,j,\ell=1,\ldots,k$ can be 
interpreted
as a factorization of $Q_k$.
For reasons of numerical stability 
we 
here work with an orthonormal basis $z_1,\ldots,z_k$,
i.e., $Z_k:=[z_1,\ldots,z_k]\in\CC^{n\times k}$ is an 
orthogonal matrix.
This is not
a restriction if the 
columns of the first block
row of $Q_{k}$ are linearly independent. 
Note that the first block row of $Q_k$
can only be linearly independent if 
 $k\le n$. This
is the case for large-scale
nonlinear eigenvalue problems, 
as the one we consider in this paper.

Suppose for the moment  that we have
carried out $k-1$ iterations of Algorithm~\ref{alg:infarnoldi}.
From Lemma~\ref{thm:vij_tensor}
we know that the basis matrix can be factorized according
to \eqref{eq:vij_tensor}. 
The following results show that one loop, i.e., steps 3-11, can be carried
out without explicitly storing $Q_k$, but instead 
only storing the factorization \eqref{eq:vij_tensor}
represented by the tensor $a_{i,j,\ell}$ for  $i,j,\ell=1,\ldots,k$
 and the matrix $Z_k\in\CC^{n\times k}$.
Instead of carrying out operations on $Q_k$ 
that lead to $Q_{k+1}$, we construct
equivalent operations on the factorization of
$Q_k$, i.e., 
$a_{i,j,\ell}$ for  $i,j,\ell=1,\ldots,k$
 and the matrix $Z_k\in\CC^{n\times k}$,
that directly lead to the factorization of $Q_{k+1}$,
i.e.,  $a_{i,j,\ell}$ for  $i,j,\ell=1,\ldots,k+1$
 and the matrix $Z_{k+1}\in\CC^{n\times (k+1)}$,
without explicitly forming $Q_k$ or $Q_{k+1}$.

To this end, suppose we have $a_{i,j,\ell}$ for $i,j,\ell=1,\ldots,k$
 and $z_{1},\ldots,z_{k}$  available after $k-1$ iterations
such that \eqref{eq:vij_tensor} is satisfied, and
consider the steps 3-11 one-by-one.
In Step 3 we need to compute the vectors $y_2,\ldots,y_{k+1}$. 
They can be computed  from the factorization of $Q_k$, 
since 
\begin{equation}  \label{eq:yj_z}
  y_j=\frac{1}{j-1}q_{j-1,k}=\frac{1}{j-1}\sum_{\ell=1}^ka_{j-1,k,\ell} z_\ell,
\end{equation}
for $j=2,\ldots,k+1$. The vector $y_1$ is (in Step 4)
computed using  \eqref{eq:y0} and $y_2,\ldots,y_{k+1}$ 
and does not explicitly require the basis matrix. For
 reasons of efficiency (which we further discuss in Remark~\ref{rem:complexity})
 we carry out \eqref{eq:yj_z} with an equivalent matrix-matrix multiplication, 
\begin{equation}  \label{eq:yj_zmat}
  \begin{bmatrix}\tilde{y}_2 & \cdots& \tilde{y}_{k+1}
\end{bmatrix}=
  Z_k A_k,
\end{equation}
where $A_k^T=[a_{i,k,\ell}]_{i,j=1}^k$
and subsequently setting 
\begin{equation} \label{eq:yj_tilde}
y_j=\frac{1}{j-1}\tilde{y}_j\textrm{ for }j=2,\ldots,k+1.  
\end{equation}

In order to efficiently carry out the Gram-Schmidt orthogonalization
process in step 6-9, it turns out to be
efficient to first form a new vector $z_{k+1}$, which can be used in the factorized representation of $Q_{k+1}$. 
We \emph{define} a new 
vector $z_{k+1}$ via a Gram-Schmidt orthogonalization
 of $y_1$ against $z_1,\ldots,z_k$. 
That is, we compute 
$z_{k+1}\in\CC^n$ and $t_1,\ldots,t_{k+1}\in\CC$  such that 
\begin{equation} 
\label{eq:y0_gm}
  y_1=t_1z_1+\cdots+t_{k+1}z_{k+1}
\end{equation}
and expand $Z_{k+1}:=\begin{bmatrix}Z_k & z_{k+1}\end{bmatrix}$ such that $Z_{k+1}^HZ_{k+1}=I$. 

The new vector  $y$ (formed in Step 5) can now be expressed 
using the factorization, since 
\begin{equation}\label{eq:y_kron}
  y=
  \begin{bmatrix}
   y_1\\
   \frac11q_{1,k}\\
   \frac12q_{2,k}\\
   \vdots\\
   \frac1kq_{k,k}\\ 
  \end{bmatrix}=
e_1\otimes y_1+
  \sum_{\ell=1}^k\begin{bmatrix}
   0\\
   \frac11a_{1,k,\ell}\\
   \frac12a_{2,k,\ell}\\
   \vdots\\
   \frac1ka_{k,k,\ell}  
  \end{bmatrix}\otimes z_\ell
= 
  \sum_{\ell=1}^{k+1}\begin{bmatrix}
   g_{1,\ell}\\
   \vdots\\
    g_{k+1,\ell}\\
  \end{bmatrix}\otimes z_\ell
\end{equation}
where we have defined $g_{i,\ell}$ as 
\begin{subequations}\label{eq:giell_def}
\begin{eqnarray}
  g_{1,\ell}&:=&t_\ell \textrm{ for } \ell=1,\ldots,k+1\\
  g_{i,\ell}&:=&\frac{1}{i-1}a_{i-1,k,\ell} 
\textrm{ for } 
i=2,\ldots,k+1,\;\;
\ell=1,\ldots,k,\\
  g_{i,k+1}&:=&0 \textrm{ for }i=2,\ldots,k+1.
\end{eqnarray}
\end{subequations}
Instead of explicitly working with $y$, we 
store the 
matrix $[g_{i,\ell}]_{i,\ell=1}^{k+1}\in\CC^{(k+1)\times (k+1)}$, 
representing the blocks of $y$ as linear combinations of $Z_{k+1}$. 

In order to derive a procedure to compute
$h\in\CC^k$ (in Step 6) without explicitly using $Q_k$, 
it is convenient to express the
relation \eqref{eq:vij_tensor} using Kronecker products. 
We have
\begin{equation}
  Q_k=
\sum_{\ell=1}^{k}
\begin{bmatrix}
  a_{1,1,\ell}	& \cdots 	& a_{1,k,\ell}	\\
  \vdots 	&  		& \vdots	\\
  a_{k,1,\ell}	& \cdots 	& a_{k,k,\ell} \\
\end{bmatrix}\otimes z_\ell.\label{eq:Vk_kron}
\end{equation}

From the definition of $h$ and 
\eqref{eq:Vk_kron} combined with \eqref{eq:y_kron}
and the orthogonality of $Z_{k+1}$,
we can now see that $h$ can be expressed without explicitly 
using $Q_k$ as follows
\begin{align}\label{eq:h_ag}
  h &=
\underline{Q}_k^Hy=
\left(\sum_{\ell=1}^{k}
\begin{bmatrix}
  a_{1,1,\ell}^*& \cdots & a_{k,1,\ell}^* & 0\\
  \vdots &  & \vdots & \vdots\\
  a_{1,k,\ell}^*& \cdots & a_{k,k,\ell}^* & 0
\end{bmatrix}\otimes z_\ell^H\right)
\left(
  \sum_{\ell'=1}^{k+1}\begin{bmatrix}
   g_{1,\ell'}\\
   \vdots\\
    g_{k+1,\ell'}\\
  \end{bmatrix}\otimes z_{\ell'}
\right)
\\ & =
\sum_{\ell=1}^{k}
\begin{bmatrix}
  a_{1,1,\ell}^*& \cdots & a_{k,1,\ell}^*\\
  \vdots &  & \vdots\\
  a_{1,k,\ell}^*& \cdots & a_{k,k,\ell}^*
\end{bmatrix}\begin{bmatrix}
   g_{1,\ell}\\
   \vdots\\
    g_{k,\ell}\\
  \end{bmatrix}.	\nonumber
\end{align}
In Step 7 we need 
to compute the orthogonal completement of $y$ 
with respect to $\underline{Q}_k$. This can be
represented without explicit use of $Q_k$ 
as follows
\begin{align}\label{eq:yperp_f}
y_\perp & =y-\underline{Q}_kh
=
\sum_{\ell=1}^{k+1}\begin{bmatrix}
   g_{1,\ell}\\
   \vdots\\
    g_{k+1,\ell}\\
  \end{bmatrix}\otimes z_\ell
-
\sum_{\ell=1}^{k}
\left(\begin{bmatrix}
  a_{1,1,\ell}& \cdots & a_{1,k,\ell}\\
  \vdots &  & \vdots\\
  a_{k,1,\ell}& \cdots & a_{k,k,\ell}\\
   0 & \cdots & 0
\end{bmatrix}h\right)\otimes z_\ell\\ & =
\sum_{\ell=1}^{k+1}\begin{bmatrix}
   f_{1,\ell}\\
   \vdots\\
    f_{k+1,\ell}\\
  \end{bmatrix}\otimes z_\ell,	\nonumber
\end{align}
where we have used the elements
of the matrix $[f_{i,j}]_{i,\ell=1}^{k+1}$
with columns defined by
\begin{equation}  \label{eq:fdef}
\begin{bmatrix}
  f_{1,\ell}\\
  \vdots\\
  f_{k+1,\ell}
\end{bmatrix}:=
\begin{bmatrix}  
g_{1,\ell}\\
\vdots\\
g_{k+1,\ell}
\end{bmatrix}
-
\begin{bmatrix}
  a_{1,1,\ell}& \cdots & a_{1,k,\ell}\\
  \vdots &  & \vdots\\
  a_{k,1,\ell}& \cdots & a_{k,k,\ell}\\
   0 & \cdots & 0
\end{bmatrix}h
\end{equation}
for $\ell=1,\ldots,k$ and $f_{i,k+1}:=g_{i,k+1}$ for  $i=1,\ldots,k+1$. 

We need $\beta$ in Step~8, which is defined as the Euclidean norm of $y_\perp$.
Due to the  orthogonality of $Z_{k+1}$, we can 
also express $\beta$ without using vectors of length $n$. 
In fact, it turns out that $\beta$
is the Frobenius norm of the matrix $[f_{i,j}]_{i,j=1}^{k+1}$,
since
\begin{align*}
  \beta^2&=\|y_\perp\|^2 =
\left(\sum_{\ell=1}^{k+1}\begin{bmatrix}
   f_{1,\ell}\\
   \vdots\\
    f_{k+1,\ell}
  \end{bmatrix}\otimes z_\ell\right)^H
\left(
\sum_{\ell'=1}^{k+1}\begin{bmatrix}
   f_{1,\ell'}\\
   \vdots\\
    f_{k+1,\ell'}
  \end{bmatrix}\otimes z_{\ell'},
\right)\\
&=\sum_{\ell=1}^{k+1}
\begin{bmatrix}
   f_{1,\ell}\\
   \vdots\\
    f_{k+1,\ell}
  \end{bmatrix}^H\begin{bmatrix}
   f_{1,\ell}\\
   \vdots\\
    f_{k+1,\ell}
  \end{bmatrix}=\left\|[f_{i,j}]_{i,j=1}^{k+1}\right\|_{\rm frob}^2.
\end{align*}

Finally (in Step 11), we
expand $Q_k$ by one column corresponding 
$q_{k+1}$, which is the  normalized orthogonal complement. 
By using the introduced matrix $[f_{i,j}]_{i,j=1}^{k+1}$
we have that
\begin{equation} \label{eq:vkp1}
  q_{k+1}=\frac{1}{\beta}y_\perp=
\frac{1}{\beta}\sum_{\ell=1}^{k+1}\begin{bmatrix}
   f_{1,\ell}\\
   \vdots\\
    f_{k+1,\ell}\\
  \end{bmatrix}\otimes z_\ell.  
\end{equation}

Let us now define
\begin{subequations}\label{eq:akp1ell}
\begin{eqnarray}
  a_{i,k+1,\ell}&=&\frac{1}{\beta}f_{i,\ell},
\textrm{ for }i,\ell=1,\ldots,k+1,\\
  a_{k+1,j,\ell}&=&0\textrm{ for } \ell=1,\ldots,k+1, j=1,\ldots,k\\
  a_{i,j,k+1}&=&0\textrm{ for } i=1,\ldots,k+1, j=1,\ldots,k.
\end{eqnarray}
\end{subequations}
Hence, $a_{i,j,\ell}$ for $i,j,\ell=1,\ldots,k+1$
and $Z_{k+1}$ can be seen as 
a factorization of $Q_{k+1}$
in the sense of \eqref{eq:vij_tensor}, 
since the column 
added in comparison to the
factorization of $Q_k$ is precisely \eqref{eq:vkp1}.

We summarize the above reasoning
with a precise result
showing how
the dependence on $Q_k$ 
for every step in Algorithm~\ref{alg:infarnoldi}
can be removed, including how a factorization
of $Q_{k+1}$ can be constructed.

\begin{thm}[Equivalent steps of algorithm] \label{thm:equiv}
Let $Q_k$ be
the basis matrix generated by $k-1$ iterations of 
Algorithm~\ref{alg:infarnoldi} and
suppose $a_{i,j,\ell}$, for $i,j,\ell=1,\ldots,k$
 and $Z_k$ are given 
such that they represent a factorization of $Q_k$
of the type \eqref{eq:vij_tensor}.
The quantities computed (by executing Steps~3-11) 
in iteration $k$ satisfy the following relations.
\begin{itemize}
  \item[(i)] The vectors $y_2,\ldots,y_{k+1}$ computed in {\bf Step 3}, satisfy \eqref{eq:yj_zmat}.
  \item[(ii)] Suppose $y_1$ (computed in Step 3)  satisfies
$y_1\not\in\operatorname{span}(z_1,\ldots,z_{k})$. 
Let $z_{k+1}\in\CC^n$ and $t_1,\ldots,t_{k+1}\in\CC$ 
be the result of the Gram-Schmidt process 
satisfying \eqref{eq:y0_gm}. Moreover, let $[g_{i,\ell}]_{i,\ell=1}^{k+1}$ be defined by
\eqref{eq:giell_def}.
  Then, then $h$ computed in {\bf Step~6}, satisfies 
\eqref{eq:h_ag}.
  \item[(iii)] Let $[f_{i,\ell}]_{i,\ell=1}^{k+1}$
be defined by \eqref{eq:fdef}.
Then, the vector $y_\perp$, computed in {\bf Step~8},  satisfies 
\eqref{eq:yperp_f}.
  \item[(iv)] The scalar $\beta$, computed in {\bf Step~9}, 
satisfies $\beta=\left\|[f_{i,\ell}]_{i,\ell=1}^{k+1}\right\|_{\rm fro}$
\end{itemize}
Moreover, if we expand  $a_{i,j,\ell}$ as in \eqref{eq:akp1ell}, then, $a_{i,j,\ell}$, for $i,j,\ell=1,\ldots,k+1$ and $z_1,\ldots,z_{k+1}$
represent a factorization of $Q_{k+1}$ in the sense
that \eqref{eq:vij_tensor}
is satisfied for $k+1$.
\end{thm}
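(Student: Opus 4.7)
The plan is to verify each of the four assertions (i)–(iv) by direct substitution of the assumed factorization \eqref{eq:vij_tensor} of $Q_k$ into the corresponding line of Algorithm~\ref{alg:infarnoldi}, and then to check that the updated tensor entries \eqref{eq:akp1ell} together with $Z_{k+1}$ indeed realize \eqref{eq:vij_tensor} for $Q_{k+1}$. Essentially every claim has already been derived in the exposition preceding the theorem, so the role of the proof is to wrap those derivations in a clean inductive statement and to highlight precisely where the orthonormality of $Z_{k+1}$ enters.

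First I would handle (i). Step~3 of Algorithm~\ref{alg:infarnoldi} defines $y_j = \frac{1}{j-1} q_{j-1,k}$, and inserting the factorization \eqref{eq:vij_tensor} directly produces \eqref{eq:yj_z}. Collecting the vectors $\tilde y_j := (j-1)y_j$ as columns of a matrix and using the definition $A_k^T = [a_{i,k,\ell}]_{i,\ell=1}^{k}$ yields exactly the matrix form \eqref{eq:yj_zmat}, after which \eqref{eq:yj_tilde} recovers the $y_j$. This step is just bookkeeping.

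For (ii), I would first expand $y$ as in \eqref{eq:y_kron} using the Gram--Schmidt representation \eqref{eq:y0_gm} of $y_1$ for the top block and \eqref{eq:vij_tensor} for the remaining blocks; the definition \eqref{eq:giell_def} of $g_{i,\ell}$ is precisely what makes the two contributions combine into a single Kronecker sum against $z_1,\ldots,z_{k+1}$. Then I would compute $h = \underline{Q}_k^H y$ by rewriting $\underline Q_k$ via \eqref{eq:Vk_kron} (augmented with a zero block row) and using the mixed-product rule $(A\otimes z^H)(B\otimes z') = (AB)\otimes (z^H z')$. Here the orthonormality $z_\ell^H z_{\ell'} = \delta_{\ell\ell'}$ collapses the double sum into the single sum \eqref{eq:h_ag}; this is the main place the whole construction depends on $Z_{k+1}$ being orthogonal, and is the step I expect to require the most care with indices. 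Assertion (iii) then follows by subtracting $\underline Q_k h$ from the Kronecker expansion of $y$; the definition \eqref{eq:fdef} of $f_{i,\ell}$ simply collects coefficients of $z_\ell$ columnwise (with $f_{i,k+1}=g_{i,k+1}$ since the $(k+1)$-th slice of $A$ is absent). Assertion (iv) is immediate: because $\{z_\ell\}_{\ell=1}^{k+1}$ is orthonormal, the Kronecker representation in \eqref{eq:yperp_f} gives $\|y_\perp\|_2^2 = \sum_{\ell=1}^{k+1} \sum_{i=1}^{k+1} |f_{i,\ell}|^2 = \|[f_{i,\ell}]\|_{\mathrm{fro}}^2$.

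Finally, for the factorization claim, I would observe that $Q_{k+1} = [\underline Q_k,\, q_{k+1}]$ with $q_{k+1} = y_\perp/\beta$, which by \eqref{eq:vkp1} is exactly the Kronecker sum with coefficients $f_{i,\ell}/\beta$. Definition \eqref{eq:akp1ell} then sets the new slice $a_{i,k+1,\ell} = f_{i,\ell}/\beta$ for $i,\ell=1,\ldots,k+1$ (giving the new column), while the zero-padding $a_{k+1,j,\ell}=0$ and $a_{i,j,k+1}=0$ reflects the block-triangular structure \eqref{eq:Vkblocks} and the fact that the previous columns do not acquire any component in the new direction $z_{k+1}$. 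Checking that each block $q_{i,j}$ of $Q_{k+1}$ agrees with $\sum_{\ell=1}^{k+1} a_{i,j,\ell} z_\ell$ is then a one-line verification for the old block columns (the sum over $\ell=1,\ldots,k$ is unchanged and the $\ell=k+1$ term is zero) and for the new column it is precisely \eqref{eq:vkp1}. The non-degeneracy assumption $y_1\notin\operatorname{span}(z_1,\ldots,z_k)$ is exactly what guarantees that the Gram--Schmidt step in (ii) produces a genuine new unit vector $z_{k+1}$, so that $Z_{k+1}$ remains orthonormal and the induction can continue.
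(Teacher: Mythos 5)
Your proposal is correct and follows essentially the same route the paper takes: the paper does not supply a separate proof environment for Theorem~\ref{thm:equiv} but instead presents the theorem as a summary of the derivations in \eqref{eq:yj_z}--\eqref{eq:akp1ell}, which is precisely the sequence of Kronecker-product substitutions and orthonormality reductions you carry out. Your added emphasis on where orthonormality of $Z_{k+1}$ is used and on the role of the non-degeneracy hypothesis $y_1\notin\operatorname{span}(z_1,\ldots,z_k)$ matches the paper's (implicit) reasoning and does not change the argument.
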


The above theorem directly gives
us a practical algorithm. We state it
explicitly in Algorithm~\ref{alg:infarnoldi2}.
The details of
the (possibly) repeated Gram-Schmidt process in Step~9
is straightforward and left out for brevity.
\begin{algorithm}
\caption{Tensor infinite Arnoldi method - TIAR  \label{alg:infarnoldi2}}%
\SetKwInOut{Input}{Input}\SetKwInOut{Output}{output}
\Input{$x_1\in\CC^n$}
\BlankLine
\nl Let $Q_1=x_1/\|x_1\|_2$,  $H_0=$empty matrix\\
\For{$k=1,2,\ldots,m$}{
\nl Compute $y_2,\ldots,y_{k+1}$ from  $a_{i,k,\ell}$,
$i,\ell=1,\ldots,k$  and $Z_k$ using 
\eqref{eq:yj_zmat}-\eqref{eq:yj_tilde}
\label{step:infarn2:y2yk}\\
\nl Compute $y_1$ from $y_2,\ldots,y_{k+1}$ using \eqref{eq:y0}\label{step:infarn2:y1}\\
\nl Compute $t_1,\ldots,t_{k+1}$ and $z_{k+1}$ 
by orthogonalizing $y_1$ against $z_1,\ldots,z_k$ 
using a (possibly repeated) Gram-Schmidt process 
such that \eqref{eq:y0_gm} is satisfied.\label{step:infarn2:Zorth}\\
\nl Compute the matrix $G=[g_{i,\ell}]_{i,\ell=1}^{k+1}$ using \eqref{eq:giell_def}\\
\nl Compute $h\in\CC^k$ using \eqref{eq:h_ag}\label{step:infarn2:h}\\
\nl Compute the matrix $F=[f_{i,\ell}]_{i,\ell=1}^{k+1}$ using \eqref{eq:fdef}\label{step:infarn2:F}\\
\nl Possibly repeat Steps~\ref{step:infarn2:h}-\ref{step:infarn2:F} and obtain updated  $h$ and  $F$\\
\nl Compute $\beta=\left\|F\right\|_{\rm fro}$ \label{step:infarn2:beta}\\
\nl Expand $a_{i,j,\ell}$ using \eqref{eq:akp1ell}\\
\nl Let $H_k = \left[\begin{array}{cc}H_{k-1} & h \\ 0 & \beta \end{array}\right] \in\CC^{(k+1)\times k}$\\
}
\nl Compute the eigenvalues $\{\mu_i\}_{i=1}^m$ of the leading $m\times m$ submatrix
of the Hessenberg matrix $H_k$ 	\\
\nl Return approximations $\{1/\mu_i\}_{i=1}^m$
\end{algorithm}

\begin{remark}[Computational performance of IAR and TIAR]\label{rem:complexity}\rm
Under the condition that $q_{1,1},\ldots, q_{1,m}$ 
are linearly independent, 
Algorithm~\ref{alg:infarnoldi} (IAR) and 
Algorithm~\ref{alg:infarnoldi2} (TIAR) are equivalent
in exact arithmetic. 
The required computational resources 
of the two algorithms are however very different and 
TIAR appears to  be preferable over IAR, in general.

The first advantage of TIAR concerns the memory requirements. 
More precisely, in TIAR, the basis matrix is stored using 
a tensor $[a_{i,j,\ell}]_{i,j,\ell=1}^m \in \mathbb{C}^{m \times m \times m}$ and a matrix 
 $Z_m \in \mathbb{C}^{n \times m}$. Therefore, TIAR requires the storage of  $\mathcal{O}(m^3)+\mathcal{O}(mn)$ numbers.
In contrast to this, in IAR we need to store   $\mathcal{O}(m^2n)$ numbers
since the basis matrix $Q_m$ is of size $mn \times m$.
Therefore, assuming that $m \ll n$, TIAR requires much less memory than IAR.



The essential computational effort of carrying out $m$ steps of IAR 
consists of: $m$ linear solves, computing
$\sum_{i=1}^kM^{(i)}(0)x_i$, for $k=1,\ldots,m$, 
and orthogonalizing a vector of length $kn$ 
against $k$ vectors of size $kn$ for $k=1,\ldots,m$. The orthogonalization
has complexity
\begin{equation}\label{eq:t_IAR_orth}
 t_{\rm IAR,orth}(m,n)=\mathcal{O}(m^3n),
\end{equation}
which is the dominating cost when the linear solves are relatively cheap 
as in the waveguide eigenvalue problem.

On the other hand, the computationally dominating part of carrying out
$m$ steps of TIAR is as follows. 
Identical to IAR, $m$ steps require $m$ linear solves, and 
the computation of  $\sum_{i=1}^kM^{(i)}(0)x_i$, for $k=1,\ldots,m$. 
The orthogonalization process in TIAR (Step \ref{step:infarn2:Zorth}-\ref{step:infarn2:beta}) is computationally cheaper than IAR. More precisely, 
\[
 t_{\rm TIAR,orth}(m,n)=\mathcal{O}(m^2n).
\]
Unlike IAR, TIAR requires a computational effort 
in order to access the vectors $y_2,\ldots,y_k$ in Step~\ref{step:infarn2:y2yk}
since they are implicitly given via $a_{i,j,k}$ and $Z_k$. 
In Step~\ref{step:infarn2:y2yk} we compute 
$y_2,\ldots,y_k$  with \eqref{eq:yj_tilde}
and \eqref{eq:yj_zmat} which correspond to  multiplying
a matrix of size $n\times k$ with a matrix of size $k\times k$ (and subsequently scaling the vectors). Hence, the operations corresponding to Step~\ref{step:infarn2:y2yk} for  $m$ iterations of TIAR can be carried out in
\begin{equation}\label{eq:t_TIARStep2}
 t_{\rm TIAR, Step~\ref{step:infarn2:y2yk}}(m,n) =\mathcal{O}(m^3n).
\end{equation}

At first sight, nothing is gained since the complexity of the
orthogonalization in IAR \eqref{eq:t_IAR_orth}
and Step~2 of 
TIAR, are both $\mathcal{O}(m^3n)$. However, it turns out 
that TIAR is often considerably faster in practice. 
This can be explained as follows. 
In the orthogonalization process of IAR we must compute 
$h=\underline{Q}_k^Ty$ where $\underline{Q}_k\in\CC^{kn\times k}$, whereas  in Step~2 in TIAR we must compute
$Z_kA_k^T$ (in \eqref{eq:yj_zmat}) where $Z_k\in\CC^{n\times k}$ 
and $A_k\in\CC^{k\times k}$. Note that the operation $Z_kA_k^T$
 involves $nk+k^2$ values, whereas $\underline{Q}_k^Ty$
involves $nk^2$ values, i.e., Step~2 in TIAR 
involves less data.
This implies that on modern computer architectures, 
where CPU caching makes operations on small 
data-sets more efficient, it is in practice
 considerably faster to compute $Z_kA_k^T$ than 
 $\underline{Q}_k^Ty$ 
although the operations have the same computational complexity. 
This difference is also verified in the simulations in 
section~\ref{sect:simulations}.
\end{remark}

\section{Adaption to the waveguide problem}\label{sect:adaption}

\subsection{Cayley transformation}\label{sect:cayley}

One interpretation of IAR
involves a derivation via the truncated Taylor
series expansion.
The truncated Taylor expansion is expected
to converge slowly for points close to 
branch-point singularies, 
and in general not converge at all for
points further away from the origin
than the closest singularity. 
Note that $M$ defined
in \eqref{eq:nep} has branch point
singularities at the roots of $\beta_{\pm,k}(\gamma)$, $k=-p,\ldots,p$ where $\beta_{\pm,k}$ is defined in \eqref{eq:betadef}. 
In our situation, the eigenvalues of interest are
close to the imaginary axis and, since the roots of $\beta_{\pm,k}$ are
purely imaginary, the singularities
are purely imaginary, which suggests
poor performance of IAR (as well as TIAR) when applied to $M$.

In order to resolve this, we  
first carry out a Cayley transformation 
which moves the singularities 
to the unit circle and the eigenvalues of interest
to points inside the unit disk, 
i.e., inside the convergence disk.

In our setting, the Cayley transformation 
for a shift $\gamma_0\in (-\infty, 0) \times (-2\pi, 0)$ is given by
\begin{equation}  \label{eq:cayley}
  \lambda=\frac{\gamma-\gamma_0}{\gamma+\conj{\gamma}_0},
\end{equation}
and its inverse is 
\begin{equation}  \label{eq:cayleyinv}
  \gamma=\frac{\gamma_0+\lambda\conj{\gamma}_0}{1-\lambda}.
\end{equation}
The shift should
be chosen close the eigenvalues of interest, 
i.e., close to the imaginary axis.
However, the shift should not be chosen too close to the imaginary axis,
because this generates a transformed problem where the 
eigenvalues are close to the singularities.

Note that the transformed problem is still
a nonlinear eigenvalue problem of the type
\eqref{eq:nep}, and we can easily remove the poles introduced
by the denominator in \eqref{eq:cayley}. More precisely,
we work with the transformed
 nonlinear eigenvalue problem
\begin{eqnarray}  
 \tilde{M}(\lambda)&:=&
 \begin{bmatrix}
   (1-\lambda)^2I &  \\
   &   (1-\lambda)I
 \end{bmatrix}
M(\tfrac{\gamma_0+\lambda\conj{\gamma}_0}{1-\lambda})\notag
\\\label{eq:Mtildedef}
&=&\begin{bmatrix}
F_A(\lambda) & 
F_{C_1}(\lambda)\\
  (1-\lambda)C_2^T & \tilde{P}(\lambda)
\end{bmatrix},
\end{eqnarray}
where 
\begin{eqnarray}
F_A(\lambda)&:=&
(1-\lambda)^2A_0+ (\gamma_0+\lambda\conj{\gamma}_0)(1-\lambda)A_1+	(\gamma_0+\lambda\conj{\gamma}_0)^2A_2, \notag\\
F_{C_1}(\lambda)&:=&
(1-\lambda)^2C_{1,0}+ (\gamma_0+\lambda\conj{\gamma}_0)(1-\lambda)C_{1,1}+	(\gamma_0+\lambda\conj{\gamma}_0)^2C_{1,2}, \notag\\
\tilde{P}(\lambda)&:=&(1-\lambda)P(\frac{\gamma_0+\lambda\conj{\gamma}_0}{1-\lambda}).\label{eq:ptildeeq}
\end{eqnarray}


\subsection{Efficient computation of $y_1$} \label{sect:iar_adaption}
In order apply IAR or TIAR to the waveguide
problem, we need to provide a procedure to compute
$y_1$ in Step~\ref{step:infarn2:y1} of
Algorithm~\ref{alg:infarnoldi} and 
Algorithm~\ref{alg:infarnoldi2} 
using \eqref{eq:y0}. 
The structure of $\tilde{M}$ in \eqref{eq:Mtildedef}
can be explicitly exploited and merged with Step~2 as follows.
We analyze \eqref{eq:y0} for $k\ge 3$. It is straightforward 
to compute the corresponding formulas for $k<3$.
Due to the definition of $\tilde{M}$ in
\eqref{eq:Mtildedef}, formula \eqref{eq:y0} can be expressed
as
\begin{equation}\label{eq:y1z}
-\tilde{M}(0)y_1=z_1+z_2,
\end{equation}
with
\begin{eqnarray}
  z_1&:=&
  \begin{bmatrix}
    F'_A(0)y_{2,1}+F'_{C_1}(0)y_{2,2}  +
    F''_A(0)y_{3,1}+F''_{C_1}(0)y_{3,2}  
\\
-C_2^Ty_{2,1}
  \end{bmatrix},  \notag\\
  z_2&:=&
  \begin{bmatrix}
    0\\
    \sum_{i=1}^k\tilde{P}^{(i)}(0)y_{i+1,2}.
  \end{bmatrix}\label{eq:z2_structure}
\end{eqnarray}
where we have decomposed $y_i^T=(y_{i,1}^T,y_{i,2}^T)$,
$i=1,\ldots,k$  
with
$y_{i,1}\in\CC^{n_xn_z}$ and
$y_{i,2}\in\CC^{2n_z}$. The linear
system of equations \eqref{eq:y1z}
can be solved by precomputing the Schur complement and its 
LU-factorization, which is not a dominating
component (in terms of execution time) in our situation.
The vector $z_1$ can be computed directly by using the definition of $F_A$ and $F_{C_1}$. 
Using the definition of $\tilde{P}$ in \eqref{eq:ptildeeq}, 
we can express the bottom block of $z_2$ as
\begin{equation} 
  z_{2,2}=
   \begin{bmatrix}
R &0
  \\
0 & R
\end{bmatrix}
\sum_{i=1}^k 
D_i
\left(
   \begin{bmatrix}
R^{-1} &0
  \\
0 & R^{-1}
\end{bmatrix}
y_{i+1,2}
\right)\in\CC^{2n_z},
\end{equation}
where  $D_i:=\diag(\alpha_{-,-p,i},\ldots,\alpha_{-,p,i},
\alpha_{+,-p,i},\ldots,
\alpha_{+,p,i})$
with 
\begin{equation} \label{eq:coeff_DtN}
  \alpha_{\pm,j,i}
:=\left(\frac{d^i}{d\lambda^i}
\left((1-\lambda)(s_{\pm,j}(
\frac{\gamma_0+\lambda\conj{\gamma}_0}{1-\lambda}) +d_0)%
\right)\right)_{\lambda=0}.
\end{equation}
In order to carry out $m$ steps of the algorithm,
we need to evaluate \eqref{eq:coeff_DtN}
 $2n_zm$ times.
We propose to do this with the
efficient recursion formula given Appendix~\ref{sect:alphas}.
We note that similar formulas 
are used in \cite{Tausch:2000:WAVEGUIDE} for slightly
different functions.

Although the above formulas can be used directly
to compute $y_1$, further performance
improvement can be achieved by 
considerations of Step~2. Note that the complexity of
Step~2 in TIAR is $\mathcal{O}(m^3n)$, as given in equation
\eqref{eq:t_TIARStep2}. The computational complexity
of this step can be decreased by using the
fact that in order to compute $y_1$ in Step~3 and equation \eqref{eq:y1z}-\eqref{eq:z2_structure}, 
we only need $y_2,y_3$ and $y_{4,2},\ldots,y_{k+1,2}\in\CC^{2n_z}$, i.e., not the full vectors. 
The structure can exploited in the operations in Step~2 
as follows.

Let $B_{11}\in\CC^{2\times 2},B_{12}\in\CC^{2\times (k-2)},B_{21}\in\CC^{(k-2)\times 2}$ and $B_{22}\in\CC^{(k-2)\times(k-2)}$ be defined as blocks
of $A_k$, 
\begin{equation}\label{eq:Akdecomposition}
 A_k=
 \begin{bmatrix}
   B_{11} & B_{12}\\
   B_{21} & B_{22}
 \end{bmatrix}.
\end{equation}
From  \eqref{eq:yj_zmat} and \eqref{eq:yj_tilde} we have 
\begin{equation}\label{eq:y2y3_expl}
  \begin{bmatrix}\tilde{y}_2 & \tilde{y}_3\end{bmatrix}=Z_k
\begin{bmatrix}
B_{11}\\
B_{21}
\end{bmatrix},\;\; 
\begin{bmatrix}y_2 & y_3\end{bmatrix}=
\begin{bmatrix}\tilde{y}_2 & \tilde{y}_3/2 \end{bmatrix}, 
\end{equation}
and
\begin{equation}\label{eq:ytilde_expl}
 \begin{bmatrix}\tilde{y}_{4,2}\; \ldots\; \tilde{y}_{k+1,2}\end{bmatrix}=Z_{k,2}B_{22},\;\;
\begin{bmatrix}y_{4,2}& \ldots& y_{k+1,2}\end{bmatrix}=
\begin{bmatrix}\tilde{y}_{4,2}/3 & \ldots& \tilde{y}_{k+1,2}/k\end{bmatrix},
\end{equation}
where $Z_{k,2}\in\CC^{2n_z\times (k-2)}$ consists of the trailing  block of $Z_k$. 

By using formulas \eqref{eq:y2y3_expl}-\eqref{eq:ytilde_expl},
we merge Step~2 and Step~3 in Algorithm~\ref{alg:infarnoldi2}
such that we can compute $y_1$ 
without computing the full vectors $y_2,\ldots,y_k$. 
For future reference we call this adaption WTIAR.

As explained in Remark~\ref{rem:complexity},
Step~2 of TIAR
is the dominating component 
in terms of asymptotic complexity.
With the adaption explained in 
\eqref{eq:Akdecomposition}-\eqref{eq:ytilde_expl},
the complexity of Step~2 in WTIAR
is 
\begin{equation}\label{eq:step2_WTIAR}
  t_{\rm WTIAR,Step~2}(m,n)=\mathcal{O}(nm^2)+\mathcal{O}(n_zm^3).
\end{equation}
If  the problem is discretized with the same
number of discretization points in $x$-direction and $z$-direction, 
we have 
  $t_{\rm WTIAR,Step~2}(m,n)=\mathcal{O}(nm^2)+\mathcal{O}(\sqrt{n}m^3)$,
which is considerable better than the complexity \eqref{eq:t_TIARStep2},
i.e., the complexity of Step~2 in the plain TIAR. 
Notice that when $n$ is sufficiently large the dominating term of the complexity 
of WTIAR is $\mathcal{O}(nm^2)$ which
 is also the complexity of the Arnoldi algorithm 
for the standard eigenvalue problem. The complexity 
is verified in practice in section~\ref{sect:simulations}. 
%


\section{Numerical experiments}\label{sect:simulations}~\\

\subsection{Benchmark example}\label{sect:refwaveguide}

%


In order to illustrate properties of our approach,
we consider a waveguide previously analyzed in \cite{Tausch:2000:WAVEGUIDE,Butler:1992:BOUNDARY}.
We set the wavenumber as in Figure~\ref{fig:k_tausch}, where 
$K_1=\sqrt{2.3} \ \omega$, $K_2=\sqrt{3} \ \omega$, $K_3 = \ \omega$
and 
$\omega=\pi$.
Recall that the task is to compute  the eigenvalues  
in the region  
$\Omega := (-\infty,  0) \times (-2 \pi, 0 ) \subset \mathbb{C}$, 
in particular those which are close to the imaginary axis.

\begin{figure}[h]
  \begin{center}
    \includegraphics{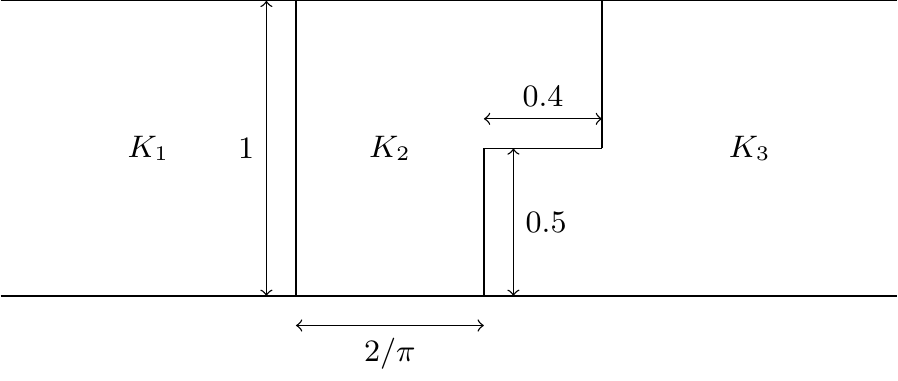}
    \caption{Geometry of the waveguide in Section~\ref{sect:refwaveguide}}\label{fig:k_tausch}
    \end{center}
\end{figure}

We select $x_-$ and $x_+$  
such that the interior domain is minimized, i.e.,
$x_-=0$ and $x_+=2/\pi+0.4$. 
The PDE is discretized with 
a FEM approach 
as explained 
in section~\ref{sect:intdisc}.
Recall that the waveguide eigenvalue problem 
has branch point singularities and 
that the algorithms we are considering 
are based on Taylor series expansion. 
As explained in section~\ref{sect:cayley},
 the location of the
shift $\gamma$ in the Cayley transformation influences
the convergence of the Taylor series, and cannot be chosen 
too close to the target, i.e., the imaginary axis.
We select $\gamma_0=-3+i \pi$, i.e., 
in the middle of  $\Omega$ in the imaginary direction. 
The error is measured  using the relative residual norm
\begin{equation}\label{eq:resnorm} \small
  E(w,\gamma):=
	      \frac{\|M(\gamma)w\|}{\sum_{i=0}^2|\gamma|^i \left( \|A_i\|+\|C_{1,i}\| \right) +
\|C_2^T\|+
2d_0+\sum_{j=-p}^p \left( |s_{j,+}(\gamma)|+|s_{j,-}(\gamma)| \right)},
\end{equation}
for $\|w\|=1$. 

We discretize the problem  
and we compute the eigenvalues of the nonlinear eigenvalue problem 
using WTIAR\footnote{All simulations were carried out
with Intel octa core i7-3770 CPU 3.40GHz and 16 GB RAM, except for
the last two rows of Table~\ref{tbl:converge_Tausch}
which were computed  with Intel Xeon 2.0 GHz and 64 GB RAM.
}. They are reported, for different discretizations,
in Table~\ref{tbl:converge_Tausch}. The
required CPU time is reported in Table~\ref{tbl:cputime_and_memory_IARvsTIAR}. The solution 
of the problem with the
finest discretization, i.e., the last row in 
\ref{tbl:converge_Tausch}, was computed in more than 10 hours. The bottleneck for the finest discretization is 
the memory requirements of the
 computation of the LU-factorization of the Schur complement
corresponding to $\tilde{M}(0)$. 
An illustration of the execution of WTIAR, for this problem, is given in 
Figure~\ref{fig:simulation_Tausch}, where 
the domain is discretized with $n_x=640$ and $n_z=641$ 
and $m=100$ iterations are performed. 
We observe in Figure~\ref{sfig:err_hist_tausch}
 that two Ritz values converge within the region of interest 
and two additional approximations converge 
to values with positive real part and of no interest in this application.

\begin{figure}
\ffigbox[]{
\begin{subfloatrow}
  \ffigbox[\FBwidth]
    {\caption{Relative residual norm history}\label{sfig:err_hist_tausch}}
    {\scalebox{0.7}{\includegraphics{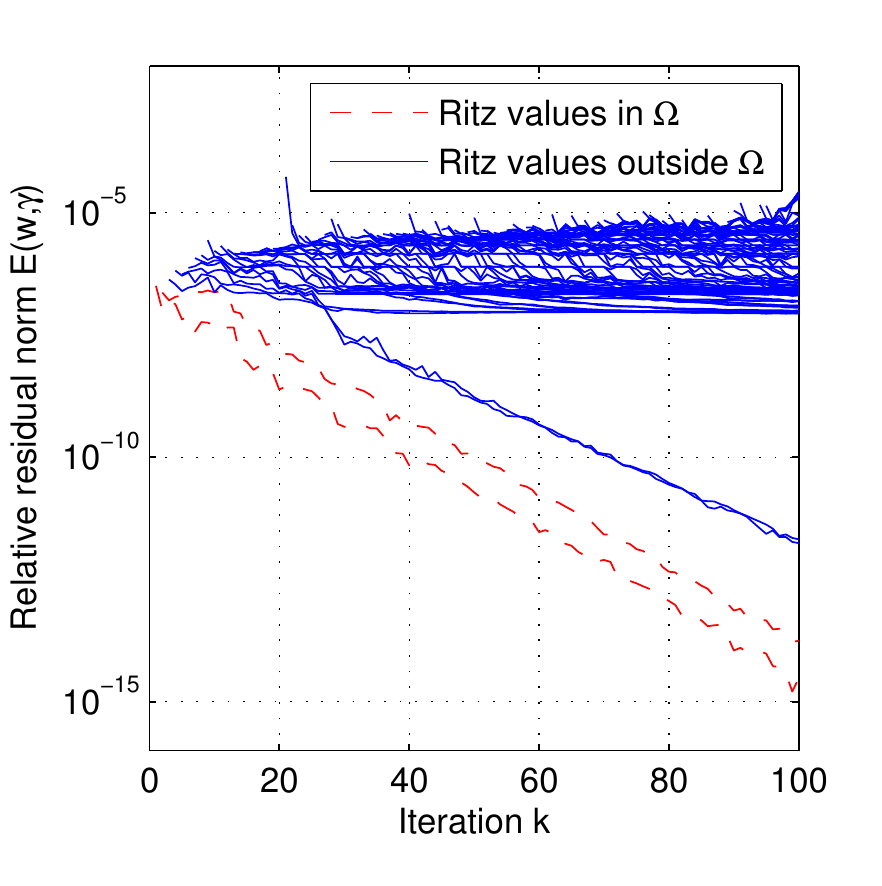}}}
\end{subfloatrow}  \vspace*{\columnsep}
\begin{subfloatrow}
\vbox{
  \ffigbox[\FBwidth]
    {\caption{Absolute value of the eigenfunction}\label{sfig:eigenfunction_tausch}}
    {\scalebox{0.7}{\includegraphics{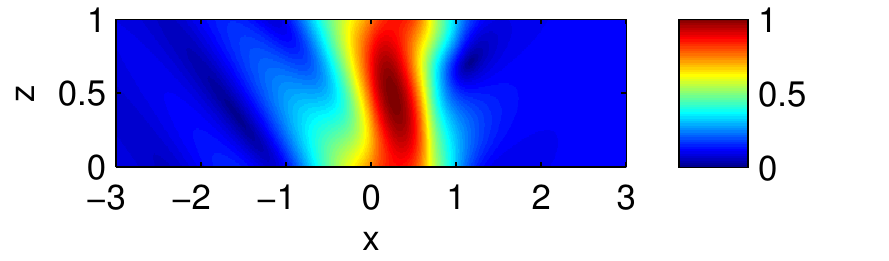}}\vspace{-0.3cm}}	\par
  \ffigbox[\FBwidth]
    {\caption{Fourier coefficients decay of $u(x_-,z)$ and $u(x_+,z)$}\label{sfig:fourier_tausch}}
    {\scalebox{0.7}{\includegraphics{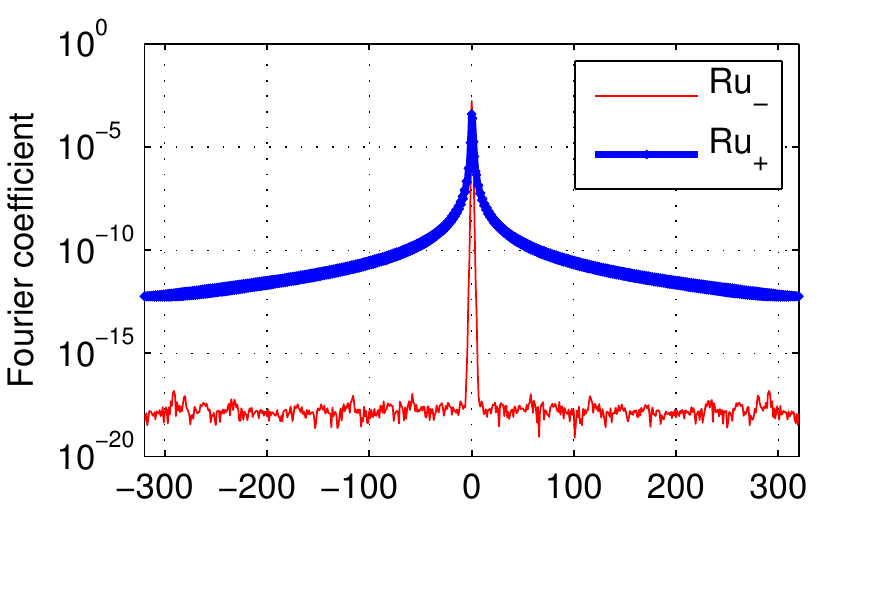}}\vspace{-0.5cm}}	
}
\end{subfloatrow}  
}{\caption{Execution of $m=100$ iterations of WTIAR. 
The parameters of the DtN maps are 
$x_-=0$ and $x_+=2/\pi+0.4$. 
The domain is discretized setting $n_x=640$ and $n_z=641$. }\label{fig:simulation_Tausch}}
\end{figure}

In Figure~\ref{sfig:fourier_tausch}
we observe that the Fourier coefficients do not
have exponential decay for $u(x_+,z)$.
Indeed, the decay is quadratic, which is 
consistent with the fact that the solutions
are $C^1$-functions, but in general not $C^2$, as
explained in Remark~\ref{remark:smooth}.
In particular,
the second derivative
of the eigenfunction is not continuous in $x=x_+$.
Hence, the eigenfunctions appear to have just weak regularity,
which means that the waveguide eigenvalue problem
does not have a strong solution. 
This supports the choice of the discretization method, 
based on the FEM, that we use in this paper.
In these simulations we selected $x_\pm$ such that
the enterior domain is minimized. We also carried out 
simulations for larger interior domains, without observing any
qualitive difference in the computed solutions. By Remark~\ref{remark:smooth},
this suggests that the $C^1$-assumption is not
a restriction in this case.


The plot of the absolute value of one eigenfunction 
is given in Figure~\ref{sfig:eigenfunction_tausch}.
The convergence rate with respect to discretization, 
appears to be quadratic 
in the diameter of the 
elements. See Table~\ref{tbl:converge_Tausch}. 
%
%
%
%

\begin{table}
 \begin{center}
  \resizebox{12cm}{!} {
    \begin{tabular}{|c|c|c|c|c|}
\hline
Problem size	&	$n_x$	&	$n_z$	&	First eigenvalue		&	Second eigenvalue		\\
\hline
132		&	10	&	11	& 	-0.010297987 - 4.966269257i 	&	-0.008202089 - 1.390972357i	\\
\hline
462		&	20	&	21	& 	-0.009556975 - 4.965939619i	&	-0.009012367 - 1.337899343i	\\
\hline
1,722		&	40	&	41	& 	-0.009401369 - 4.965933116i	&	-0.009258151 - 1.322687924i	\\
\hline
6,642		&	80	&	81	& 	-0.009368285 - 4.966067569i	&	-0.009332752 - 1.318511833i	\\
\hline
26,082		&	160	&	161	& 	-0.009359775 - 4.966072322i	&	-0.009350769 - 1.317465909i	\\
\hline
103,362		&	320	&	321	& 	-0.009357649 - 4.966071811i	&	-0.009355348 - 1.317202268i	\\
\hline
411,522		&	640	&	641	& 	-0.009357159 - 4.966073495i	&	-0.009356561 - 1.317134070i	\\
\hline
1,642,242	&	1,280	&	1,281	& 	-0.009357028 - 4.966073418i	&	-0.009356859 - 1.317117443i	\\
\hline
6,561,282	&	2,560	&	2,561	&	-0.009356994 - 4.966073409i	&	-0.009356933 - 1.317113346i	\\
\hline
9,009,002	&	3,000	&	3,001	&	-0.009356991 - 4.966073406i	&	-0.009356938 - 1.317112905i	\\
\hline
\end{tabular}
}
 \end{center}
 \caption{Eigenvalue approximations stemming from WTIAR
with  $m=100$.}
 \label{tbl:converge_Tausch}
\end{table}


As we mentioned in Remark~\ref{rem:complexity}, 
TIAR requires less memory and has the same complexity of IAR, 
although it is in practice considerable faster. 
According to section~\ref{sect:iar_adaption}, 
WTIAR requires the same memory resources as TIAR, but WTIAR
 has lower complexity. These properties are illustrated in   
Figure~\ref{sfig:time_execution_TIARvsIAR} and 
Table~\ref{tbl:cputime_and_memory_IARvsTIAR}.
\begin{figure}
\ffigbox[]{
\begin{subfloatrow}
  \ffigbox[\FBwidth]
    {\caption{CPU time needed to perform $m=100$ iterations of IAR, TIAR and WTIAR}\label{sfig:time_execution_TIARvsIAR}}
    {\scalebox{0.7}{\includegraphics{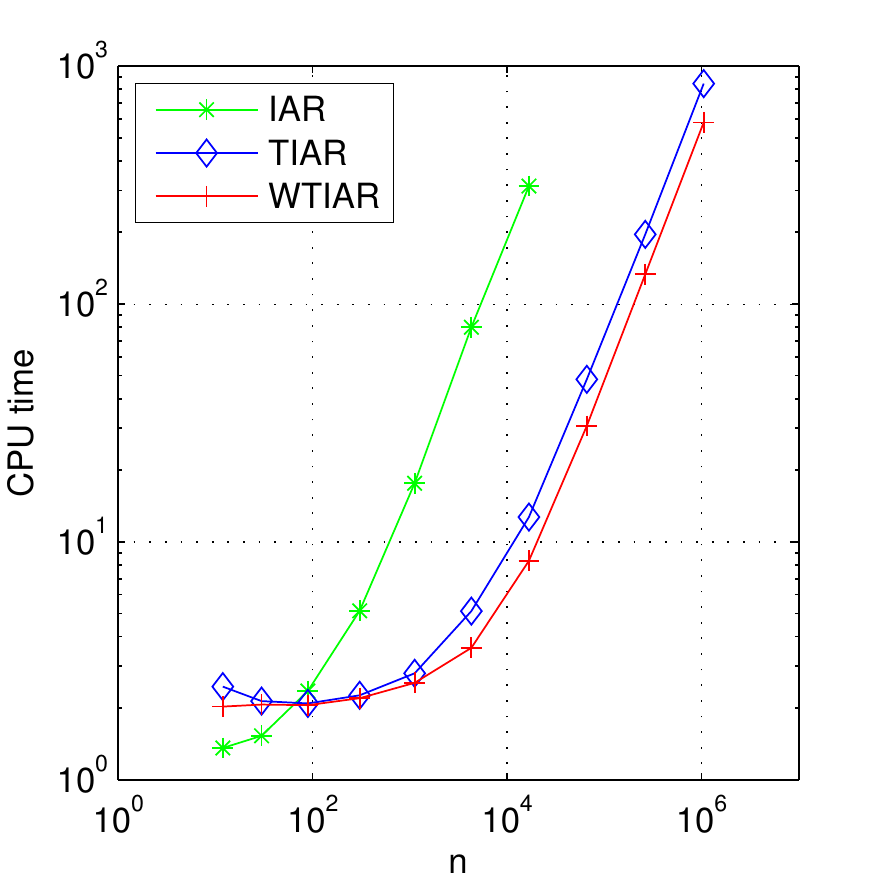}}}	\par
  \ffigbox[\FBwidth]
    {\caption{Difference between the matrices $H_{m}$ and Ritz values in $\Omega$ computed with IAR and WTIAR}\label{sfig:stability}}
    {\scalebox{0.7}{\includegraphics{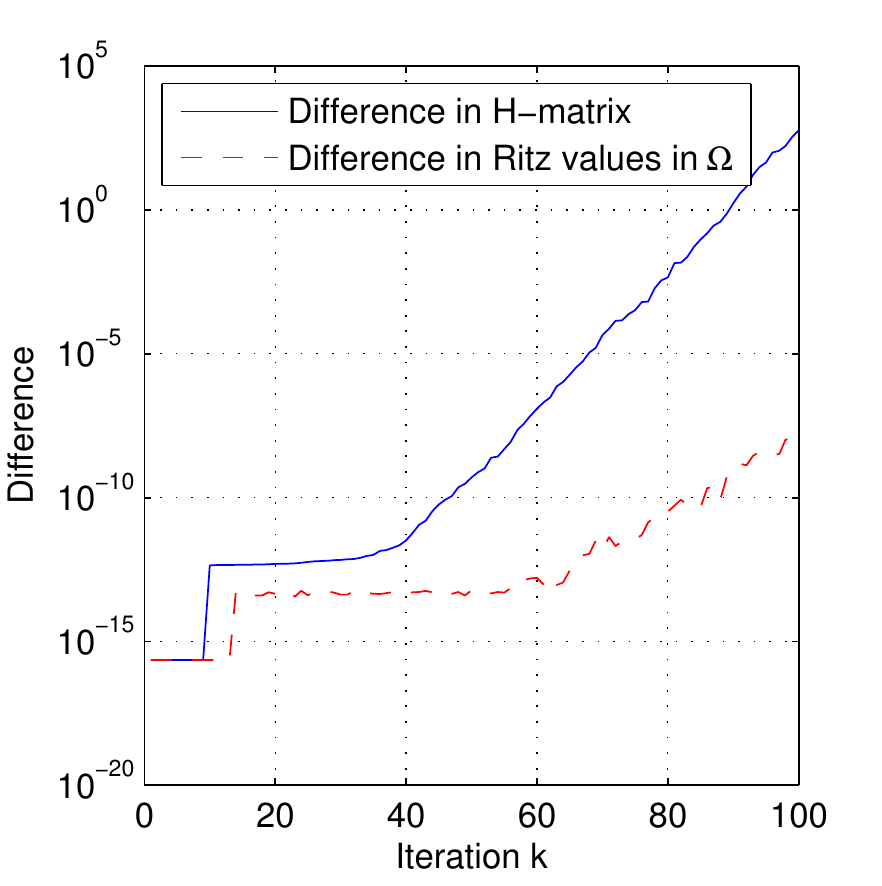}}}	
\end{subfloatrow} 
}{\caption{Comparison between IAR, TIAR and the WTIAR in terms of 
CPU time and stability}\label{fig:comparison_IAR_TIAR}}
\end{figure}
\begin{table}
 \begin{center}
   \resizebox{12cm}{!} {
    \begin{tabular}{c|c|c|c|c|c|c|}
     \cline{4-7}
		\multicolumn{3}{c}{}		&	 \multicolumn{2}{|c|}{CPU time } 			&	\multicolumn{2}{|c|}{storage of $Q_m$} 		\\
\hline
\multicolumn{1}{|c|}{$n$}		&	$n_x$	&	$n_z$	&	  IAR				&				WTIAR&	 		IAR 		&	 TIAR				\\
\hline
\multicolumn{1}{|c|}{462}		&	20	&	21	& 	 8.35 secs			&		2.58 secs		&		35.24 MB	&	7.98 MB				\\
\hline
\multicolumn{1}{|c|}{1,722}		&	40	&	41	& 	 28.90 secs			&		2.83 secs		&		131.38 MB	&	8.94 MB				\\
\hline
\multicolumn{1}{|c|}{6,642}		&	80	&	81	& 	1 min and 59 secs		&		4.81 secs		&		506.74 MB	&	12.70 MB			\\
\hline
\multicolumn{1}{|c|}{26,082}		&	160	&	161	& 	8 mins and 13.37 secs		&		13.9 secs		&		1.94 GB		&	27.52 MB			\\
\hline
\multicolumn{1}{|c|}{103,362}		&	320	&	321	& 	out of memory			&		45.50 secs		&		out of memory	&	86.48 MB			\\
\hline
\multicolumn{1}{|c|}{411,522}		&	640	&	641	& 	out of memory			&		3 mins and 30.29 secs	&		out of memory	&	321.60 MB			\\
\hline
\multicolumn{1}{|c|}{1,642,242}		&	1280	&	1281	& 	out of memory			&		15 mins and 20.61 secs	&		out of memory	&	1.23 GB				\\
\hline
\end{tabular}
}
 \end{center}
 \caption{CPU time and estimated memory required to perform $m=100$ iterations of IAR and WTIAR. The memory requirements
for the storage of the basis is the same TIAR and WTIAR.}
 \label{tbl:cputime_and_memory_IARvsTIAR}
\end{table}
As we showed in the theorem~\ref{thm:equiv} TIAR and IAR are mathematically 
equivalent by construction. 
However, IAR and TIAR (as well as WTIAR) 
incorporate orthogonalization in different
ways which may influence the impact of round-off errors.
It turns out that the $H_m$ matrices 
computed with IAR and TIAR 
are numerically different, but the Ritz values in $\Omega$
 have a small difference. See 
Figure~\ref{sfig:stability}.
This suggests that there is an effect of the roundoff errors, but for the purpose of 
computing the Ritz values located in $\Omega$, such error is benign.
Moreover, since TIAR requires less operations in the orthogonalization and implicitly imposes the structure, 
the roundoff errors might even be smaller for TIAR than IAR.

                                          
We mentioned in section~\ref{sect:iar_adaption}, 
that when $n$ became sufficiently large, 
the dominating part of WTIAR 
is the orthogonalization process.
This can be observed in Figure~\ref{fig:complexity_practice}.
Recall that the orthogonalization in WTIAR 
has complexity $\mathcal{O}(nm^2)$,
which is also the complexity 
of the standard Arnoldi algorithm. 
%
%
\begin{figure}
\ffigbox[]{
\begin{subfloatrow}
  \ffigbox[\FBwidth]
    {\caption{IAR}\label{ab}}
    {\scalebox{0.7}{\includegraphics{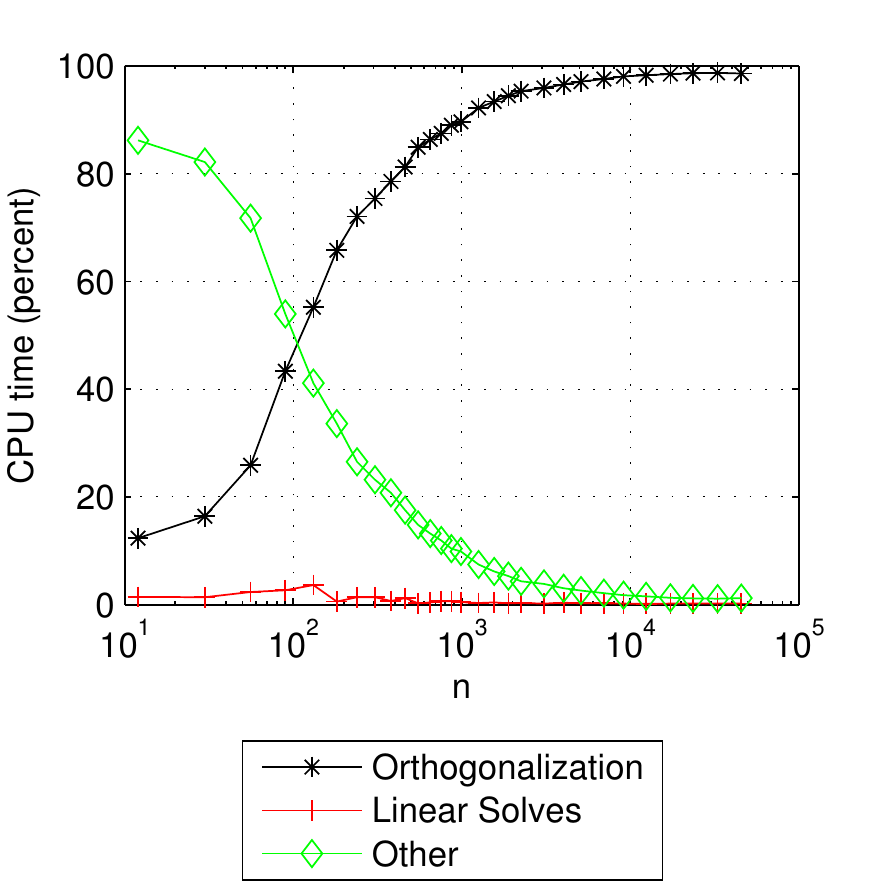}}\vspace{0.4cm}}
  \ffigbox[\FBwidth]
    {\caption{WTIAR}\label{a}}
    {\scalebox{0.7}{\includegraphics{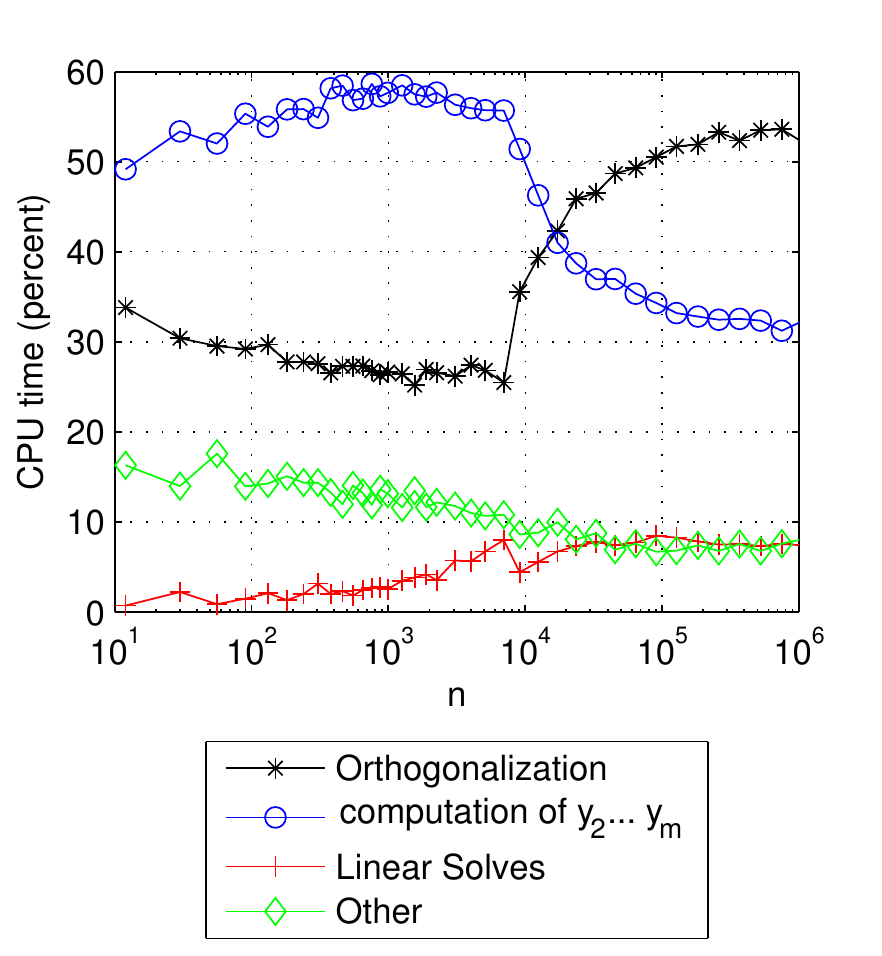}}}
\end{subfloatrow}}{\caption{CPU time (percent) of the main parts of IAR and WTIAR with $m=100$ iterations.}	\label{fig:complexity_practice}}
\end{figure}
Hence, solving
the waveguide eigenvalue problem with WTIAR 
 using a fine discretization,
has in this sense the same complexity as solving a standard
eigenvalue problem of the same size using the Arnoldi algorithm. 
According to remark~\ref{rem:complexity} 
the dominating part of IAR 
is also the orthogonalization process, 
but this has higher complexity $\mathcal{O}(nm^3)$.
See Figure~\ref{fig:complexity_practice}a.


\subsection{Waveguide with complex shape} \label{sect:wg2}
In order to show the generality of our algorithm, we 
carried out simulations on a 
waveguide with a more complex geometry and solutions.
It is described in Figure~\ref{fig:k_waveguide} 
where $K_1=\sqrt{2.3} \omega$, $K_2=2 \sqrt{3} \omega$, 
$K_3=4 \sqrt{3} \omega$ and $K_4=\omega$ and $\omega=\pi$.

\begin{figure}[h]
  \begin{center}
    \includegraphics{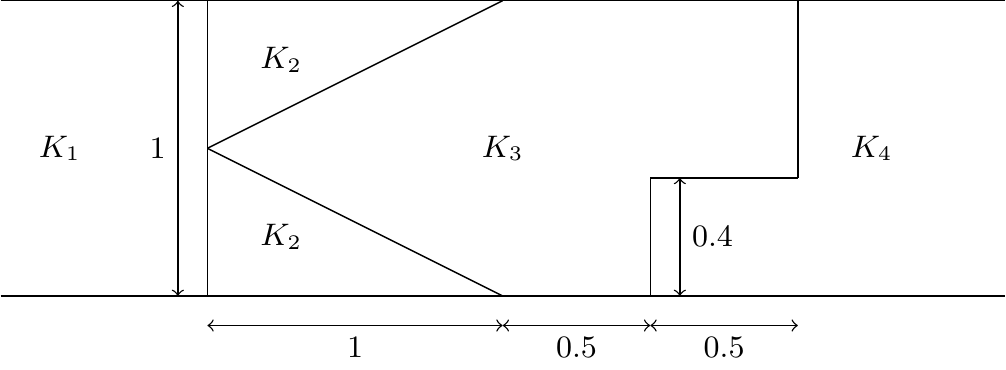} 
    \caption{Geometry of the waveguide}	\label{fig:k_waveguide}
    \end{center}
\end{figure}

We again select $x_-$ and $x_+$ 
such that the interior
domain is minimized, i.e., $x_-=0$ 
and $x_+ = 2$. We discretize the problem and 
choose the same discretization 
parameters as in section~\ref{sect:refwaveguide} and 
choose as 
shift 
$\gamma_0 = - 2 - i\pi$. 
An illustration of the execution of WTIAR, for this problem, is given in 
Figure~\ref{fig:simulation_challenging_waveguide}, where 
the domain is discretized with $n_x=640$ and $n_z=641$ and $m=100$ iterations are performed. 
We observe that several Ritz values converge within the region of interest 
$\Omega$. 
See 
Figure~\ref{sfig:error_hyst_challenging} and 
Figure~\ref{sfig:eigenvalues_challenging}.
One of the dominant eigenfunctions
is illustrated in  Figure~\ref{sfig:dominant_eigenfunction_challenge_waveguide}.

\begin{figure}
\ffigbox[]{
\begin{subfloatrow}
  \ffigbox[\FBwidth]
    {\caption{Eigenvalues (converged Ritz values) and singularities}\label{sfig:eigenvalues_challenging}}
    {\scalebox{0.7}{\includegraphics{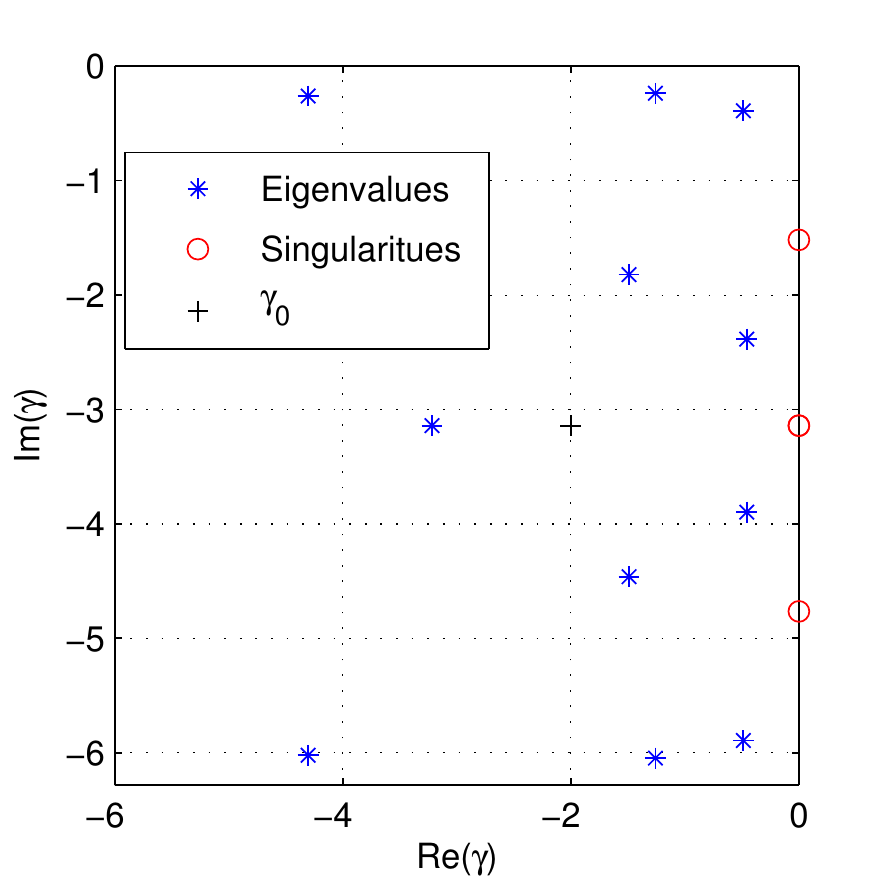}}}	
  \ffigbox[\FBwidth]
    {\caption{Relative residual norm history}\label{sfig:error_hyst_challenging}}
    {\scalebox{0.7}{\includegraphics{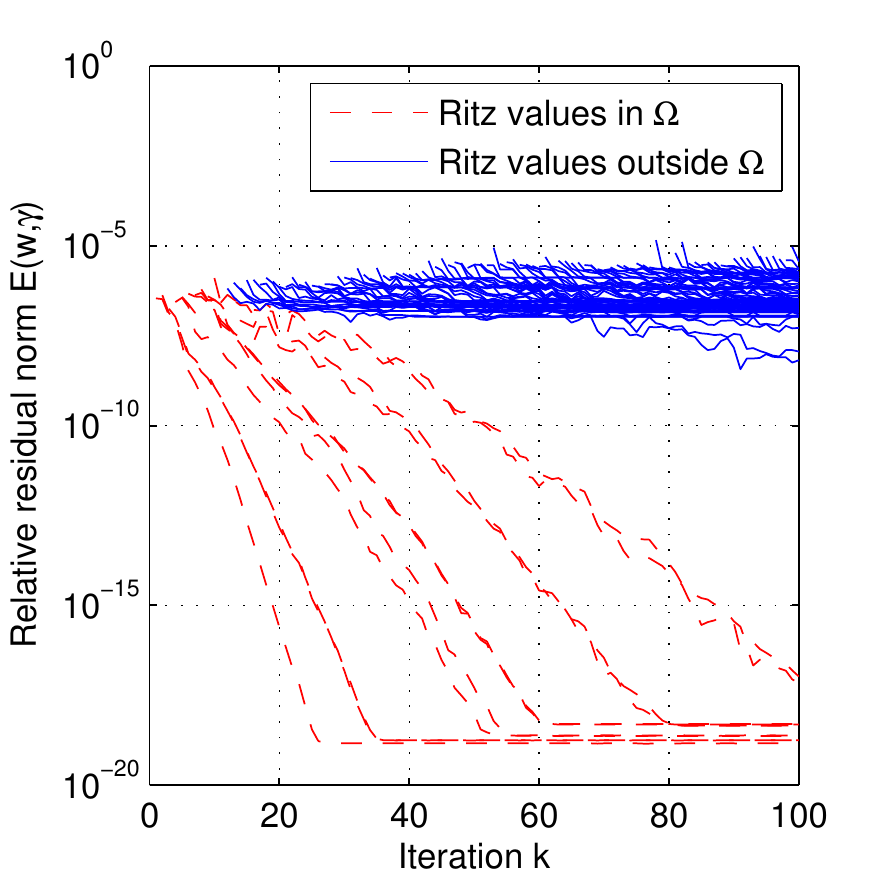}}}	
\end{subfloatrow} \par 
\begin{subfloatrow}
  \ffigbox[\FBwidth]
    {\caption{Absolute value of the eigenfunction}\label{sfig:dominant_eigenfunction_challenge_waveguide}}
    {\scalebox{0.7}{\includegraphics{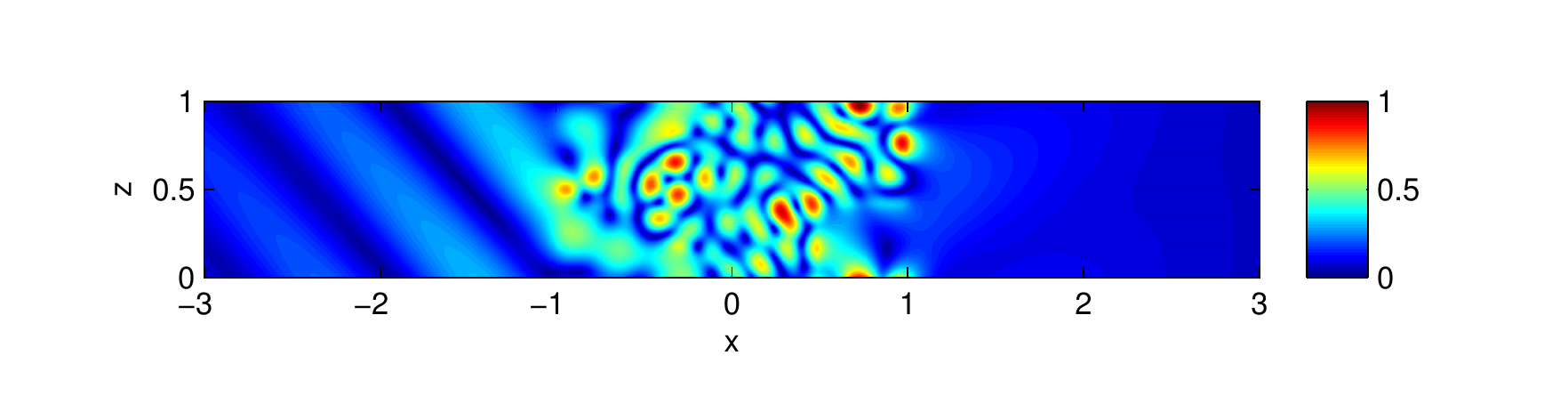}}\vspace{-0.5cm}}
\end{subfloatrow}
}{\caption{Execution of $m=100$ iterations of WTIAR. The parameters of the DtN 
maps are $x_- = 0$ and $x_+ = 2$. The domain is discretized setting $n_x = 640$
and $n_z = 641$.
}\label{fig:simulation_challenging_waveguide}}
\end{figure}

\section{Concluding remarks and outlook}

We have in this paper presented a new general approach for 
NEPs and shown how to specialize the method to 
a specific the waveguide eigenvalue problem
 stemming from analysis of wave propagation.
Note that the non-polynomial nonlinearity stems
from the absorbing boundary conditions. In our setting
we were able establish an explicit characterization
of the DtN maps, which allowed us to incorporate
the structure at an algorithmic level. This approach does not
appear to be restricted to the waveguide problem. Many
PDEs can be constructed with 
absorbing boundary conditions expressed in a closed forms.
By appropriate analysis, in particular differentiation
with respect to the eigenvalue, the approach
should carry over to other PDEs and other absorbing boundary
conditions.

There exist several variants of  IAR, e.g., the Chebyshev version 
\cite{Jarlebring:2012:INFARNOLDI} 
and restarting variations \cite{Jarlebring:2014:SCHUR}. 
There are also related rational Krylov methods
 \cite{VanBeeumen:2013:RATIONAL}. The
results of this paper may also be extendable to 
these situations, although this would require further analysis.
In particular,  all of these methods
require (in some way) a
 quantity corresponding to formula $y_1$ in \eqref{eq:y0}, 
for which the problem-dependent structure must be incorporated.
The computation of this quantity must be accurate
and efficient and require considerable problem-specific
attention. 

\appendix
\section{Proof of Lemma~\ref{thm:DtN}}\label{sect:DtNproof}
We consider the exterior problem on $S_+$ in (\ref{eq:wg_exterior}).
The proof corresponding to $S_-$ is analogous. 
To simplify the notation we
write $\beta_k=\beta_{+,k}(\gamma)$ and 
assume, without loss of
generality, that $x_+=0$.
By Remark~\ref{remark:smooth} the
solutions of \eqref{eq:wg_exterior}
are in $C^1$ and 
every vertical trace can then be expanded in a Fourier series.
 Therefore we can express 
$$
   w(x,z) = \sum_{k\in{\mathbb Z}} w_k(x) e^{2\pi i k z},\qquad
   g(z) = \sum_{k\in{\mathbb Z}} g_ke^{2\pi i k z}.
$$
By again using Remark~\ref{remark:smooth}, we have that 
the solutions to the exterior problem are in $C^\infty$ if $x>0$ and 
 satisfy \eqref{eq:wg_exterior1}. Therefore, 
the coefficients $w_k$ satisfy
$$
  \sum_{k\in{\mathbb Z}} \left(w_k'' - (2\pi k)^2 w_k + 4\pi \gamma i w_k + 
  (\gamma^2+\kappa_+^2)w_k
  \right)e^{2\pi i k z} = 
  \sum_{k\in{\mathbb Z}} \left(w_k'' + \beta_k w_k
  \right)e^{2\pi i k z} = 0,
$$
where $\beta_k$ is given in \eqref{eq:betadef}.
Thus, in order for $w$ to satisfy \eqref{eq:wg_exterior1}, 
we have
$$
w_k'' + \beta_kw_k=0,
$$
for all $k$. We now claim that 
there are constants $C,C'$ independent of $k$ such that
\begin{equation}\label{eq:betaclaim}
|\beta_k|\leq C(1+ (2\pi k)^2),\qquad
|\Im \sqrt{\beta_k}|\geq  C'\sqrt{1+ (2\pi k)^2}.
\end{equation}
In particular, $\beta_k\neq 0$ and
$$
   w_k(x) = c_k e^{i\sign(\im(\beta_k))\sqrt{\beta_k}x}+d_k e^{-i\sign(\im(\beta_k))\sqrt{\beta_k}x}.
$$
To determine $c_k$ and $d_k$ we have two boundary conditions.
First, 
since $w\in H^1(S_+)$, then $|w_k(x)|$ can not grow as $x\to\infty$.
This means that $d_k=0$.
Second, at $x=0$, we have $w(0,z)=g(z)$, so $c_k=g_k$.
Hence, we have the explicit solution
$$
   w_k(x) = g_k e^{i\sign(\im(\beta_k))\sqrt{\beta_k}x}.
$$
Existence is thus proved, and the relationship \eqref{eq:DtNrel} for the
DtN maps follows directly for this solution by differentiating
$w_k(x)$ and evaluating at $x=0$. We also have ${\mathcal T}_{+,\gamma}[g]\in L^2(0,1)$ since
$$
||{\mathcal T}_{+,\gamma}[g]||^2_{L^2(0,1)}
= 
\sum_{k\in{\mathbb Z}} |\beta_k|\,|g_k|^2\leq
C\sum_{k\in{\mathbb Z}} (1+(2\pi k)^2)\,|g_k|^2=C ||g||_{H^1([0,1])}.
$$
Finally, the estimate \eqref{eq:uest}
is
given by
\begin{align*}
  ||w||_{H^1(S_+)}^2 &=
  ||w||_{L^2(S_+)}^2
  +||\nabla w||_{L^2(S_+)}^2
   = \sum_{k\in{\mathbb Z}} \left[\left(1+
   (2\pi k)^2\right)
   ||w_k||_{L^2(0,\infty)}^2
   +
   ||w'_k||_{L^2(0,\infty)}^2\right]\\
  &= \sum_{k\in{\mathbb Z}} |g_k|^2\left(1 + (2\pi k)^2 + |\beta_k|
  \right)\int_0^\infty 
  e^{-2\sign(\im(\beta_k))\Im\sqrt{\beta_k}x}dx\\
  &= \sum_{k\in{\mathbb Z}} \frac{|g_k|^2}{2|\Im\sqrt{\beta_k}|}\left(1 + (2\pi k)^2 + |\beta_k|
  \right)
  \leq 
  \frac{C+1}{2C'}
  \sum_{k\in{\mathbb Z}} |g_k|^2 \sqrt{1+(2\pi k)^2}\\&=
  \frac{C+1}{2C'}||g||_{H^{1/2}([0,1])}^2.
\end{align*}
Uniqueness follows from this estimate. 
It remains to show the claim \eqref{eq:betaclaim}.
The estimate for $|\beta_k|$ is straightforward. For
the second estimate we note that $|\Im\beta_k| = 2|\Re \gamma(2\pi k+ \Im\gamma)|\neq 0$ for all $k$ 
from the assumptions $\Im\gamma\not\in 2\pi\ZZ$
and $\Re\gamma\neq 0$. It follows that also
$\Im\sqrt\beta_k\neq 0$ for all $k$ and since
$$
  \lim_{|k|\to\infty} a_k:=
  \lim_{|k|\to\infty} \frac{\Im\sqrt{\beta_k}}{\sqrt{1+(2\pi k)^2}} =
   \Im\sqrt{\lim_{|k|\to\infty} \frac{\beta_k}{1+(2\pi k)^2}} = 1,
$$
the sequence $\{1/a_k\}$ is bounded. Hence, there is a $C'$ such that
\eqref{eq:betaclaim} holds, which concludes the proof.

\section{Matrices of the FEM--discretization} \label{sect:fem}
The matrices $(A_i)_{i=0}^2$ and $(C_{1,i})_{i=0}^2$ are stem from to the 
Ritz--Galerkin discretization of the bilinear forms $a, b$ and $c$. 
They can be decomposed and expressed as
\begin{align*}
A_0		& :=	D_{xx} + D_{zz} + K,				&	C_{1,0}		&:=	\tilde D_{xx} + \tilde D_{zz} + \tilde K,	\\
A_1		& :=	2 D_{z}, 					&	C_{1,1}		&:=	2 \tilde D_{z},					\\
A_2		& :=	B,						&	C_{1,2}		&:=	\tilde B.					
\end{align*}
Now we need to define the following
tridiagonal Toeplitz matrices. 
Let $E_m$ be the tridiagonal Toeplitz matrix
with diagonals consisting of 
$e_{i+1,i}=-1$, 
$e_{i,i}=2$ and
$e_{i,i+1}=-1$.
Let $F_m$ be the tridiagonal Toeplitz matrix
with diagonals consisting of 
$f_{i+1,i}=1$, 
$f_{i,i}=4$ and
$f_{i,i+1}=1$.
Let $G_m$ be the anti-symmetric
tridiagonal Toeplitz matrix consisting
of
$g_{i+1,i}=1$, 
$g_{i,i}=0$ and
$g_{i,i+1}=-1$.

Then, we have
{\footnotesize
\begin{align*}
&D_{xx} 	=	-\frac{h_z}{6h_x} 	E_{n_x} 	\otimes \left(B_{n_z}	 +e_{n_z}e_1^T+e_1e_{n_z}^T \right), 	&	&\tilde D_{xx} 	=	-\frac{h_z}{6h_x} 	-(e_1, e_{n_x})		\otimes \left(F_{n_z}	 +e_{n_z}e_1^T+e_1e_{n_z}^T \right),	\\
&D_{zz} 	=	-\frac{h_x}{6h_z} 	F_{n_x} 	\otimes \left(E_{n_z}	 -e_{n_z}e_1^T-e_1e_{n_z}^T \right),	&	&\tilde D_{zz} 	=	-\frac{h_x}{6h_z} 	 (e_1,  e_{n_x}) 	\otimes \left(E_{n_z} 	 -e_{n_z}e_1^T-e_1e_{n_z}^T \right),	\\
&D_{z} 		=	-\frac{h_x}{12} 	F_{n_x} 	\otimes \left(G_{n_z}	 +e_{n_z}e_1^T-e_1e_{n_z}^T \right),	&	&\tilde D_{z} 	=	-\frac{h_x}{12} 	 (e_1,  e_{n_x}) 	\otimes \left(G_{n_z}	 +e_{n_z}e_1^T-e_1e_{n_z}^T \right),	\\
&B		=	 \frac{h_xh_z}{36}	G_{n_x} 	\otimes \left(G_{n_z}	 +e_{n_z}e_1^T-e_1e_{n_z}^T \right),	&	&\tilde B	=	 \frac{h_xh_z}{36}	 (e_1, -e_{n_x})  	\otimes \left(G_{n_z}	 +e_{n_z}e_1^T-e_1e_{n_z}^T \right).
\end{align*}
}
\noindent The matrices $K$ and $\tilde K$ arise from the Ritz--Galerkin discretization of the bilinear form 
\[
 f(u,v) = \int_0^1 \int_{x_-}^{x_+} \kappa(x,z)^2 u(x,z) v(x,z) \ dx \ dz.
\]
The elements of such matrices are obtained integrating the product of two basis 
functions against the square of the wavenumber. We can split the 
integral over the elements. Recall that $\kappa(x,z)$ is piecewise constant, then 
the final task is to compute, for each element, the integral of a piecewise 
polynomial function. Such integral is given in an explicit form by quadrature formulas.

\section{Computation of the derivatives in DtN-map}\label{sect:alphas}
In the computation of $y_1$ described in section~\ref{sect:iar_adaption},
we need the coefficients $\alpha_{\pm,j, \ell}$. They
can be computed with the following three-term recurrence. 
\begin{lem}[Recursion for $\alpha_{\pm,j,\ell}$]
Suppose $\gamma_0\not\in i\RR$. Then
the coefficients in \eqref{eq:coeff_DtN} 
are explicitly given by 
\begin{equation}
 \begin{cases}
  \alpha_{\pm,j, 0} = i w_j f_{\pm,j,0} + d_0		&			\\
  \alpha_{\pm,j, 1} = i w_j f_{\pm,j,1} - d_0		&			\\
  \alpha_{\pm,j, \ell} = i w_j f_{\pm,j,\ell} \ell!			&	\ell \geq 2,	\\
 \end{cases}
\end{equation}
where coefficients $f_{\pm,j,\ell} $ satisfy the following 
three-term recurrence
\begin{equation}\label{eq:f_recursion}
\begin{cases}
\displaystyle
f_{\pm,j,\ell} 	= 
 - \frac{2 a_{\pm,j} (\ell-3) f_{\pm,j,\ell-2} + b_{\pm,j} (2 \ell -3)  f_{\pm,j,\ell-1} }{2 \ell c_{\pm,j}}	
			&	\ell \geq 2,									\\
\displaystyle 
 f_{\pm,j,0} 	= \sqrt{ c_{\pm,j}}	,								\\
 \displaystyle 
 f_{\pm,j,1} 	= \frac{b_{\pm,j}}{2 \sqrt{ c_{\pm,j}}}	,					\\
\end{cases}
\end{equation}
with
\[
 \begin{cases}
  \displaystyle
  a_{\pm,j} 	= \conj{\gamma_0}^2 - 4 \pi i j \conj{\gamma_0} - 4 \pi^2 j^2 + \kappa_{\pm}^2	,			\\
  b_{\pm,j} 	= 2 \gamma_0 \conj{\gamma_0} + 4 \pi i j (\conj{\gamma_0} - \gamma_0) + 8 \pi^2 j^2 - 2 \kappa_{\pm}^2,	\\
  c_{\pm,j} 	= 4 \pi i j \gamma_0 - 4 \pi^2 j^2 + \kappa_{\pm}^2+\gamma_0^2	,				\\
  w_{j}		= \sign \left( \re(\gamma_0) \left( \im (\gamma_0) + 2 \pi j \right)  \right).
 \end{cases}
\]
\end{lem}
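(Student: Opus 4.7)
The plan is to first rewrite the bracketed function in \eqref{eq:coeff_DtN} in a form where the dependence on $\lambda$ is isolated in a simple square root of a quadratic polynomial, and then convert the algebraic relation $F^2=\Psi$ into a first-order linear ODE whose coefficients are polynomial in $\lambda$; the three-term recurrence will then follow from reading off the Taylor coefficients of that ODE.

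First, I would compute the $\lambda$-dependence of $\beta_{\pm,j}(\gamma(\lambda))$ where $\gamma(\lambda)=(\gamma_0+\lambda\conj\gamma_0)/(1-\lambda)$. A direct substitution gives
\[
(1-\lambda)^2\,\beta_{\pm,j}(\gamma(\lambda)) = \bigl((\gamma_0+2i\pi j)+\lambda(\conj\gamma_0-2i\pi j)\bigr)^2+\kappa_\pm^2(1-\lambda)^2,
\]
and expanding the right-hand side and collecting powers of $\lambda$ yields exactly $\Psi_{\pm,j}(\lambda):=c_{\pm,j}+b_{\pm,j}\lambda+a_{\pm,j}\lambda^2$, with the coefficients as defined in the statement. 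Next I would handle the sign in $s_{\pm,j}$: since $\Im\beta_{\pm,j}(\gamma_0) = 2\Re\gamma_0(\Im\gamma_0+2\pi j)$, the sign equals $w_j$, and by continuity it stays constant for $\lambda$ in a neighborhood of $0$; because $1-\lambda>0$ locally, the principal branches can be matched so that
\[
(1-\lambda)\bigl(s_{\pm,j}(\gamma(\lambda))+d_0\bigr)=iw_j\sqrt{\Psi_{\pm,j}(\lambda)}+(1-\lambda)d_0.
\]

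Define $F(\lambda):=\sqrt{\Psi_{\pm,j}(\lambda)}$ and denote its Taylor coefficients by $f_{\pm,j,\ell}$. The base values $f_{\pm,j,0}=\sqrt{c_{\pm,j}}$ and $f_{\pm,j,1}=b_{\pm,j}/(2\sqrt{c_{\pm,j}})$ follow by evaluating $F(0)$ and $F'(0)=\Psi_{\pm,j}'(0)/(2F(0))$; note $c_{\pm,j}\neq 0$ because $\Im c_{\pm,j}=2\Re\gamma_0(\Im\gamma_0+2\pi j)\neq 0$ under the hypothesis $\gamma_0\notin i\RR$ together with $\Im\gamma_0\in(-2\pi,0)$. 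For the recurrence itself, I would start from $F^2=\Psi_{\pm,j}$, differentiate to obtain $2FF'=\Psi_{\pm,j}'$, and then multiply by $2F$ and use $F^2=\Psi_{\pm,j}$ again to derive the \emph{linear} ODE
\[
2\Psi_{\pm,j}(\lambda)\,F'(\lambda) \;=\; \Psi_{\pm,j}'(\lambda)\,F(\lambda).
\]
Substituting $F(\lambda)=\sum_\ell f_{\pm,j,\ell}\lambda^\ell$ and equating the coefficient of $\lambda^{\ell-1}$ gives, after a shift of index, the relation
\[
2c_{\pm,j}\,\ell\,f_{\pm,j,\ell}+b_{\pm,j}(2\ell-3)f_{\pm,j,\ell-1}+2a_{\pm,j}(\ell-3)f_{\pm,j,\ell-2}=0,\qquad \ell\ge 2,
\]
which is precisely \eqref{eq:f_recursion}.

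Finally I would assemble the statement for $\alpha_{\pm,j,\ell}$. Since $(1-\lambda)d_0$ is linear in $\lambda$, its derivatives at $0$ contribute $d_0$ to $\alpha_{\pm,j,0}$, $-d_0$ to $\alpha_{\pm,j,1}$, and $0$ for $\ell\ge 2$; the $iw_jF(\lambda)$ part contributes $iw_j\ell!\,f_{\pm,j,\ell}$ at each order. Combining these gives the three cases stated in the lemma. The main obstacle is not algebraic but in step two: making precise the sign/branch argument so that $(1-\lambda)s_{\pm,j}(\gamma(\lambda))$ coincides with the single-valued analytic function $iw_j\sqrt{\Psi_{\pm,j}(\lambda)}$ locally around $\lambda=0$; once this is pinned down, the ODE derivation and coefficient extraction are routine.
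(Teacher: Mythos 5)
Your proposal is correct. The setup is the same as the paper's: you compute $(1-\lambda)^2\beta_{\pm,j}(\gamma(\lambda))=\Psi_{\pm,j}(\lambda)=c_{\pm,j}+b_{\pm,j}\lambda+a_{\pm,j}\lambda^2$ (I checked the coefficients, they match), identify $\sign(\Im\beta_{\pm,j}(\gamma_0))=w_j$, and split off the $(1-\lambda)d_0$ contribution. But for the key step --- obtaining the three-term recurrence for the Taylor coefficients $f_{\pm,j,\ell}$ of $\sqrt{\Psi_{\pm,j}(\lambda)}$ --- your route is genuinely different from the paper's. The paper normalizes $\Psi_{\pm,j}$, recognizes the resulting expression as the generating function $(1-2t\xi+t^2)^{1/2}$ of the Gegenbauer polynomials $C_\ell^{(-1/2)}$, writes $f_{\pm,j,\ell}$ explicitly in terms of these, and then imports the standard Gegenbauer three-term recurrence. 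You instead square, differentiate, and use $F^2=\Psi_{\pm,j}$ again to obtain the first-order linear ODE $2\Psi_{\pm,j}F'=\Psi_{\pm,j}'F$, and then read off the recurrence from the $\lambda^{\ell-1}$ coefficient --- I verified that this coefficient matching gives exactly $2c_{\pm,j}\ell f_\ell=-b_{\pm,j}(2\ell-3)f_{\ell-1}-2a_{\pm,j}(\ell-3)f_{\ell-2}$. The paper's approach buys an explicit closed formula for $f_{\pm,j,\ell}$ and a connection to classical special functions; yours is more elementary and self-contained, requiring no outside knowledge of Gegenbauer polynomials, and it makes the origin of the specific integer coefficients $(2\ell-3)$, $(\ell-3)$ transparent. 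One small point in your favor: the paper's displayed identity reads $(1-\lambda)s_{\pm,j}(\cdot)=w_j\sqrt{\Psi_{\pm,j}(\lambda)}$, which is missing the factor $i$ present in the definition \eqref{eq:sk} and needed to reconcile with the stated formula $\alpha_{\pm,j,\ell}=iw_jf_{\pm,j,\ell}\,\ell!$; your version $iw_j\sqrt{\Psi_{\pm,j}(\lambda)}$ is the consistent one.
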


\begin{proof}
 By definition \eqref{eq:coeff_DtN}
 \begin{equation*}
  \alpha_{\pm,j,\ell} =
  \left(	\frac{d^\ell}{d\lambda^\ell}
\left((1-\lambda) s_{\pm,j} 
\left( \frac{\gamma_0+\lambda\conj{\gamma}_0}{1-\lambda}
\right) \right) \right)_{\lambda=0} + 
\left( \frac{d^\ell}{d\lambda^\ell} \left( (1-\lambda) d_0 \right) \right)_{\lambda=0}.
 \end{equation*}
The computation of the second term is straightforward. The
first term can be computed as follows.
In order to compute the derivatives in zero, we now 
derive formulas for the Taylor expansion
\[
 (1-\lambda) s_{\pm,j} 
\left( \frac{\gamma_0+\lambda\conj{\gamma}_0}{1-\lambda}
\right) = 
\sum_{\ell=0}^{+\infty} f_{\pm,j,\ell} \lambda^\ell.
\]
%
Since all functions are analytic in the origin, there exists a neighborhood of the origin $N$, such
that when $\lambda\in N$, 
\[
  (1-\lambda)  s_{\pm,j} 
\left( \frac{\gamma_0+\lambda\conj{\gamma}_0}{1-\lambda}
\right) = w_{j}  
 \sqrt{ a_{\pm,j} \lambda^2 + b_{\pm,j} \lambda + c_{\pm,j}}.
\]
Hence, we have reduced the problem to computing the power series expansion of
 $\sqrt{a_{\pm,j} \lambda^2 + b_{\pm,j} \lambda + c_{\pm,
j}}$. 
To this end we use the well known formula involving the Gegenbauer polynomials and their generating function. 
See e.g.~\cite{polyanin2006handbook}. We have that
\begin{align*}
 \sqrt{a_{\pm,j} \lambda^2 + b_{\pm,j} \lambda + c_{\pm,j}} &= 
 \sqrt{c_{\pm,j}} \left[ \left( \sqrt{\frac{a_{\pm,j}}{c_{\pm,j}}} \lambda \right)^2 
 + \frac{b_{\pm,j}}{\sqrt{a_{\pm,j} c_{\pm,j}}} \left( \sqrt{\frac{a_{\pm,j}}{c_{\pm,j}}} \lambda \right) + 1
 \right]^{-\frac{1}{2}} \\
    & =\sum_{\ell=0}^{+\infty} \sqrt{\frac{a_{\pm,j}^\ell}{c_{\pm,j}^{\ell-1}}} C_\ell^{(-1/2)} \left( - \frac{b_{\pm,j}}{2 \sqrt{a_{\pm,j} c_{\pm,j}} } \right) \lambda^\ell,
\end{align*}
where $C_\ell$ is the $\ell$-th Gegenbauer polynomial.
%
%
%
Consequently, the coefficients in the power series expansion are 
\[
 f_{\pm,j,\ell} 	= 
\sqrt{\frac{a_{\pm,j}^\ell}{c_{\pm,j}^{\ell-1}}} C_\ell^{(-1/2)} \left( - \frac{b_{\pm,j}}{2 \sqrt{a_{\pm,j} c_{\pm,j}} } \right).
\]
The recursion \eqref{eq:f_recursion} follows from substitution of the recursion formula
for Gegenbauer polynomials.
\end{proof}

\bibliographystyle{elsart-num-sort}
\bibliography{eliasbib,misc}

\begin{thebibliography}{10}
\expandafter\ifx\csname url\endcsname\relax
  \def\url#1{\texttt{#1}}\fi
\expandafter\ifx\csname urlprefix\endcsname\relax\def\urlprefix{URL }\fi

\bibitem{Bai:2005:SOAR}
Z.~Bai, Y.~Su, {SOAR: A second-order Arnoldi method for the solution of the
  quadratic eigenvalue problem}, SIAM J. Matrix Anal. Appl. 26~(3) (2005)
  640--659.

\bibitem{Bao:1995:TIMEHARMONIC}
G.~Bao, Finite element approximation of time harmonic waves in periodic
  structures, SIAM J. Numer. Anal. 32~(4) (1995) 1155--1169.

\bibitem{VanBeeumen:2014:LOWRANK}
R.~V. Beeumen, E.~Jarlebring, W.~Michiels, A rank-exploiting infinite {Arnoldi}
  algorithm for nonlinear eigenvalue problems, Tech. rep., Tech. Report,
  submitted for publication (2014).

\bibitem{VanBeeumen:2013:RATIONAL}
R.~V. Beeumen, K.~Meerbergen, W.~Michiels, A rational {Krylov} method based on
  {Hermite} interpolation for nonlinear eigenvalue problems, SIAM J. Sci.
  Comput. 35~(1) (2013) A327--A350.

\bibitem{VanBeeumen:2014:CORK}
R.~V. Beeumen, K.~Meerbergen, W.~Michiels, Compact rational {Krylov} methods
  for nonlinear eigenvalue problems, Tech. Rep. 541, KU Leuven (2014).

\bibitem{Berenger:1994:PML}
J.-P. Berenger, A perfectly matched layer for the absorption of electromagnetic
  waves, J. Comput. Phys. 114~(2) (1994) 185--200.

\bibitem{Betcke:2013:NLEVPCOLL}
T.~Betcke, N.~J. Higham, V.~Mehrmann, C.~Schr\"oder, F.~Tisseur, {NLEVP}: A
  collection of nonlinear eigenvalue problems, ACM Trans. Math. Softw. 39~(2)
  (2013) 1--28.

\bibitem{Bindel:2005:ELASTIC}
D.~Bindel, S.~Govindjee, Elastic {PMLs} for resonator anchor loss simulation,
  Int. J. Numer. Methods Eng. 64~(6) (2005) 789--818.

\bibitem{Butler:1992:BOUNDARY}
J.~Butler, W.~Ferguson, G.~A. Evans, P.~J. Stabile, A.~Rosen, A boundary
  element technique applied to the analysis of waveguides with periodic surface
  corrugations, IEEE J. of quantum electronics 28~(7) (1992) 1701--1709.

\bibitem{Effenberger:2012:CHEBYSHEV}
C.~Effenberger, D.~Kressner, Chebyshev interpolation for nonlinear eigenvalue
  problems, BIT 52~(4) (2012) 933--951.

\bibitem{Effenberger:2012:LINEARIZATION}
C.~Effenberger, D.~Kressner, C.~Engström, Linearization techniques for band
  structure calculations in absorbing photonic crystals, Int. J. Numer. Methods
  Eng. 89~(2) (2012) 180--191.

\bibitem{Engstrom:2014:SPECTRAL}
C.~Engström, Spectral approximation of quadratic operator polynomials arising
  in photonic band structure calculations, Numer. Math. 126~(3) (2014)
  413--440.

\bibitem{Evans:2010:PDEs}
L.~C. Evans, Partial differential equations. 2nd ed, American mathematical
  society, 2010.

\bibitem{Fliss:2013:DIRICHLET}
S.~Fliss, A {Dirichlet-to-Neumann} approach for the exact computation of guided
  modes in photonic crystal waveguides, SIAM J. Sci. Comput. 35~(2) (2013)
  B438--B461.

\bibitem{Givoli:2004:DISPERSIVE}
D.~Givoli, I.~Patlashenko, {Dirichlet-to-Neumann} boundary condition for
  time-dependent dispersive waves in three-dimensional guides, J. Comput. Phys.
  199~(1) (2004) 339--354.

\bibitem{Hagstrom:2003:ABS}
T.~Hagstrom, New results on absorbing layers and radiation boundary conditions,
  Ainsworth, Mark (ed.) et al., Topics in computational wave propagation.
  Direct and inverse problems. Berlin: Springer. Lect. Notes Comput. Sci. Eng.

\bibitem{Harari:1998:DIRICHLET}
I.~Harari, I.~Patlashenko, D.~Givoli, {Dirichlet-to-Neumann} maps for unbounded
  wave guides, J. Comput. Phys. 143~(1) (1998) 200--223.

\bibitem{Jarlebring:2010:DELAYARNOLDI}
E.~Jarlebring, K.~Meerbergen, W.~Michiels, A {K}rylov method for the delay
  eigenvalue problem, SIAM J. Sci. Comput. 32~(6) (2010) 3278--3300.

\bibitem{Jarlebring:2014:SCHUR}
E.~Jarlebring, K.~Meerbergen, W.~Michiels, Computing a partial {Schur}
  factorization of nonlinear eigenvalue problems using the infinite {Arnoldi}
  method, SIAM J. Matrix Anal. Appl. 35~(2) (2014) 411--436.

\bibitem{Jarlebring:2012:INFARNOLDI}
E.~Jarlebring, W.~Michiels, K.~Meerbergen, A linear eigenvalue algorithm for
  the nonlinear eigenvalue problem, Numer. Math. 122~(1) (2012) 169--195.

\bibitem{Kaufman:2006:FIBER}
L.~Kaufman, Eigenvalue problems in fiber optic design, SIAM J. Matrix Anal.
  Appl. 28~(1) (2006) 105--117.

\bibitem{Keller:1989:EXACT}
J.~B. Keller, D.~Givoli, Exact non-reflecting boundary conditions, J. Comput.
  Phys. 82~(1) (1989) 172--192.

\bibitem{Kressner:2014:TOAR}
D.~Kressner, J.~Roman, Memory-efficient {Arnoldi} algorithms for linearizations
  of matrix polynomials in {Chebyshev} basis, Numer. Linear Algebra Appl.
  21~(4) (2014) 569--588.

\bibitem{Liao:2010:NLRR}
B.-S. Liao, Z.~Bai, L.-Q. Lee, K.~Ko, Nonlinear {R}ayleigh-{R}itz iterative
  method for solving large scale nonlinear eigenvalue problems, Taiwanese
  Journal of Mathematics 14~(3) (2010) 869--883.

\bibitem{Mehrmann:2004:NLEVP}
V.~Mehrmann, H.~Voss, Nonlinear eigenvalue problems: A challenge for modern
  eigenvalue methods, GAMM-Mitt. 27 (2004) 121--152.

\bibitem{Peng:1975:THEORY}
S.~T. Peng, T.~Tamir, H.~L. Bertoni, Theory of periodic dielectric waveguides,
  IEEE Trans. Microwave Theory and Techniques 1 (1975) 123--133.

\bibitem{polyanin2006handbook}
A.~D. Polyanin, A.~V. Manzhirov, Handbook of mathematics for engineers and
  scientists, CRC Press, 2006.

\bibitem{Stowell:2010:VARIATIONAL}
D.~Stowell, J.~Tausch, Variational formulation for guided and leaky modes in
  multilayer dielectric waveguides, Commun. Comput. Phys. 7~(3) (2010)
  564--579.

\bibitem{Su:2011:REP}
Y.~Su, Z.~Bai, Solving rational eigenvalue problems via linearization, SIAM J.
  Matrix Anal. Appl. 32~(1) (2011) 201--216.

\bibitem{Tausch:2013:SLABS}
J.~Tausch, Computing {Floquet-Bloch} modes in biperiodic slabs with boundary
  elements, J. Comput. Appl. Math. 254 (2013) 192--203.

\bibitem{Tausch:2000:WAVEGUIDE}
J.~Tausch, J.~Butler, Floquet multipliers of periodic waveguides via
  {Dirichlet-to-Neumann} maps, J. Comput. Phys. 159~(1) (2000) 90--102.

\bibitem{Voss:2013:NEPCHAPTER}
H.~Voss, Nonlinear eigenvalue problems, in: L.~Hogben (ed.), Handbook of Linear
  Algebra, Second Edition, No. 164 in Discrete Mathematics and Its
  Applications, Chapman and Hall/CRC, 2013.

\bibitem{Voss:2011:LOWRANK}
H.~Voss, K.~Yildiztekin, X.~Huang, Nonlinear low rank modification of a
  symmetric eigenvalue problem, SIAM J. Matrix Anal. Appl. 32~(2) (2011)
  515--535.

\bibitem{Zhang:2013:MOR}
Y.~Zhang, Y.~Su, A memory-efficient model order reduction for time-delay
  systems, BIT 53 (2013) 1047--1073.

\end{thebibliography}

%

\end{document}